\newtheorem{theorem}{Theorem}[section]
\newtheorem{corollary}{Corollary}[section]
\newtheorem{lemma}{Lemma}[section]
\numberwithin{equation}{section}
\def \N{\mathbb{N}}
\def \Q{\mathbb{Q}}
\def \Z{\mathbb{Z}}
\def\house#1{\setbox1=\hbox{$\,#1\,$}%
\dimen1=\ht1 \advance\dimen1 by 2pt \dimen2=\dp1 \advance\dimen2 by 2pt
\setbox1=\hbox{\vrule height \dimen1 depth\dimen2\box1\vrule}%
\setbox1=\vbox{\hrule\box1}%
\advance\dimen1 by .4pt \ht1=\dimen1
\advance\dimen2 by .4pt \dp1=\dimen2 \box1\relax}
\begin{document}

\title{On  simultaneous approximation of algebraic numbers}
\author{ Veekesh Kumar and R. Thangadurai}%
\address[Veekesh Kumar]{Institute of Mathematical Sciences, A CI of Homi Bhabha National Institute, C.I.T Campus, Taramani, Chennai 600 113, India.}
\address[]{(Current Affiliation): National Institute of Science Education and Research Bhubaneswar, Khurda 752050, Odisha, India.}

\address[R. Thangadurai]{Harish-Chandra Research Institute, A CI of Homi Bhabha National Institute\\
Chhatnag Road, Jhunsi\\
Prayagraj (Allahabad) - 211019\\
INDIA.}

\email[Veekesh Kumar]{veekeshiitg@gmail.com}
\email[R. Thangadurai]{thanga@hri.res.in}
\subjclass[2010] {Primary 11J68; Secondary 11J87 }
\keywords{Approximation to algebraic numbers, Schmidt Subspace Theorem .}
\bigskip
\begin{abstract} 
Let $\Gamma\subset \overline{\mathbb Q}^{\times}$ be a finitely generated multiplicative group of algebraic numbers.  Let $\alpha_1,\ldots,\alpha_r\in\overline{\mathbb Q}^\times$  be  algebraic numbers which are $\mathbb{Q}$-linearly independent  and let $\epsilon>0$  be a given real number.  One of the main results that we prove in this article is as follows; There exist only finitely many tuples $(u, q,  p_1,\ldots,p_r)\in\Gamma\times\mathbb{Z}^{r+1}$ with $d = [\mathbb{Q}(u):\mathbb{Q}]$ for some integer $d\geq 1$ satisfying $|\alpha_i q u|>1$, $\alpha_i q u$  is not a pseudo-Pisot number for some integer $i\in\{1, \ldots, r\}$ and
$$
0<|\alpha_j qu-p_j|<\frac{1}{H^\epsilon(u)|q|^{\frac{d}{r}+\varepsilon}}
$$  
for all integers $j = 1, 2,\ldots, r$, where $H(u)$ is  the absolute Weil height.  In particular, when  $r =1$, this result was proved by Corvaja and Zannier in \cite{corv}.  As an application of our result, we also prove a transcendence criterion which generalizes a result of  Han\v{c}l,  Kolouch, Pulcerov\'a and \v{S}t\v{e}pni\v{c}ka in \cite{hancl}. The proofs  rely on the clever use of  the subspace theorem and the underlying ideas from the work of  Corvaja and Zannier. 
\end{abstract}
\maketitle

\section{introduction}

Rational approximation is a fascinating  and one of the important techniques to prove transcendental results. This area has very rich history and one of the major milestones   is a famous result of Roth \cite{roth1} extending the earlier works of Thue and Siegel (for a proof, see \cite{satha}). Then  Ridout \cite{ridd1} proved a $p$-adic version of Roth, a vast generalization is due to Schmidt \cite{schmidt} (Subspace Theorem) and many versions of the subspace theorem are available now, and these versions are applied to many problems in various branches of Number Theory (for instance, see \cite{zannier}).     
In 2004, Corvaja and Zannier \cite{corv} proved a `Thue-Roth' type inequality with `moving targets' to solve a problem of Mahler.  Recently in 2019, Kulkarni, Mavraki and Nguyen \cite{kul} proved a generalization of the Mahler problem part of  Corvaja and Zannier.  In this article, we are interested in the simultaneous approximation of algebraic numbers in the same spirit of Corvaja and Zannier. Also, we apply our main theorem  to prove a transcendental result. In order to state our results, we shall start with some terminology. 

\bigskip
In order to state our results, we start with the following definition. An algebraic number $\alpha$ is said to be a {\it pseudo-Pisot number}, if $|\alpha|>1$, $\alpha$ has integral trace, and all its other conjugates have absolute value strictly less than $1$.
 
\smallskip

\begin{theorem}\label{maintheorem} Let $\Gamma\subset \overline{\mathbb Q}^{\times}$ be a finitely generated multiplicative group of algebraic numbers.  Let $\alpha_1,\ldots, \alpha_r \in\overline{\mathbb Q}^\times$  be  $\mathbb{Q}$-linearly independent  algebraic numbers.  For a given real number $\varepsilon>0$,  let $\mathcal{B}$ be a subset of $\Gamma\times \mathbb{Z}^{r+1}$ which consists of  tuples $(u, q,  p_1,\ldots,p_r)\in\Gamma\times\mathbb{Z}^{r+1}$ with $d=[\mathbb{Q}(u):\mathbb{Q}]$ for some integer $d\geq 1$ satisfying    $|\alpha_i q u|>1$, $\alpha_i q u$  is not a pseudo-Pisot number  for some integer $i\in \{1,2,\ldots, r\}$   and
\begin{equation}\label{eq1.1}
0<|\alpha_j qu-p_j|<\frac{1}{H^\varepsilon(u)|q|^{\frac{d}{r}+\varepsilon}}\quad\mbox{for all integers~~} j=1,2,\ldots, r
\end{equation}
where $H(u)$ is the absolute Weil height of an algebraic number.  Then $\mathcal{B}$ is a finite set.  
\end{theorem}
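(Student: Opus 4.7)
The plan is to argue by contradiction via a careful application of the Schmidt Subspace Theorem, in the spirit of Corvaja--Zannier \cite{corv}. Suppose, to the contrary, that (\ref{eq1.1}) admits infinitely many tuples $(u,q,p_1,\ldots,p_r)$ satisfying the hypotheses. Fix a Galois number field $K \subset \overline{\mathbb{Q}}$ containing $\alpha_1,\ldots,\alpha_r$ together with a finite generating set of $\Gamma$, and a finite set of places $S \subset M_K$ containing all archimedean places and those non-archimedean ones where generators of $\Gamma$ fail to be units, so that every $u \in \Gamma$ is an $S$-unit. Since $d = [\mathbb{Q}(u):\mathbb{Q}]$ divides $[K:\mathbb{Q}]$, one may pass to an infinite subsequence where $d$ is fixed, and by pigeonhole also fix an index $i_0 \in \{1,\ldots,r\}$ such that $\beta := \alpha_{i_0}qu$ fails to be a $c$-pseudo-Pisot number in every tuple, together with which of the two obstructions is in force throughout: either (A) some Galois conjugate $\sigma(\beta)$ has $|\sigma(\beta)| \geq c$, or (B) $\mathrm{Tr}(\beta) \notin \mathbb{Z}$.

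To apply the Subspace Theorem, I would work with the projective point $\mathbf{x} = (qu : p_1 : \cdots : p_r) \in \mathbb{P}^r(K)$ and choose, at each $v \in S$, a system of $r+1$ linearly independent linear forms $L_0^{(v)},\ldots,L_r^{(v)}$ in $X_0,X_1,\ldots,X_r$. At the identity archimedean place $v_0$, take $L_0^{(v_0)} = X_0$ and $L_j^{(v_0)} = X_j - \alpha_j X_0$ for $j = 1,\ldots,r$, so that $\prod_{j=0}^{r} |L_j^{(v_0)}(\mathbf{x})|_{v_0} \leq |qu| \cdot H(u)^{-r\varepsilon} q^{-d-r\varepsilon}$ by (\ref{eq1.1}). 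At a distinguished conjugate archimedean place $v_1$, chosen so as to exploit the non-pseudo-Pisot hypothesis --- in case (A), a place realizing the conjugate $\sigma_{v_1}$ with $|\sigma_{v_1}(\beta)| \geq c$; in case (B), a place furnished by the trace relation --- introduce an auxiliary small linear form (for instance $X_{i_0} - \sigma_{v_1}(\alpha_{i_0}) X_0$ in case (A), or a combination reflecting the trace in case (B)). At every remaining $v \in S$ use the trivial forms $L_j^{(v)} = X_j$. Using the $S$-unit property of $u$ (so $\prod_{v \in S} |u|_v^{n_v} = 1$) together with standard height estimates relating $H_K(\mathbf{x})$ to $q$ and $H(u)$, one checks that $\prod_{v \in S}\prod_{j=0}^{r} |L_j^{(v)}(\mathbf{x})|_v^{n_v}$ falls below $H_K(\mathbf{x})^{-\varepsilon'}$ for some $\varepsilon' > 0$ depending only on $\varepsilon$, contradicting the Subspace Theorem outside a finite union of proper linear subspaces of $K^{r+1}$.

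It then remains to rule out infinitely many solutions lying in any fixed exceptional subspace. A linear relation $c_0 qu + \sum_{j=1}^{r} c_j p_j = 0$ with $c_j \in K$ not all zero, combined with $p_j = \alpha_j qu + O(H(u)^{-\varepsilon} q^{-d/r - \varepsilon})$, forces $\bigl(c_0 + \sum_{j=1}^{r} c_j \alpha_j\bigr) qu = O(q^{-\varepsilon})$ and hence $c_0 + \sum_{j=1}^{r} c_j \alpha_j = 0$ in $K$; projecting this identity onto the $\mathbb{Q}$-span of $\{1, \alpha_1, \ldots, \alpha_r\}$ and invoking the $\mathbb{Q}$-linear independence of $\alpha_1,\ldots,\alpha_r$ shows that each such subspace contributes only finitely many further tuples, completing the argument. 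The principal obstacle is the correct choice of the auxiliary form at $v_1$: in case (A), the bound $|\sigma_{v_1}(\beta)| \geq c$ yields a lower bound on $|u|_{v_1}$ which must be converted, via the $S$-unit product formula, into compensating upper bounds elsewhere; in case (B), $\mathrm{Tr}(\beta) \notin \mathbb{Z}$ offers only a Liouville-type lower bound of order $1$ that must be paired with the approximation factor to extract actual smallness. This delicate balancing is precisely what forces the threshold $d/r + \varepsilon$ in (\ref{eq1.1}) to be tight, and constitutes the new technical content beyond the $r=1,\ c=1$ case treated in \cite{corv}.
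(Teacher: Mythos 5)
Your proposal departs from the paper in a way that does not close. The paper's proof runs through Lemma~\ref{lem3.1}: one applies Theorem~\ref{schli} to the $(d+r)$-dimensional vector $\mathbf{X}=(p_1,\ldots,p_r,q\sigma_1(u),\ldots,q\sigma_d(u))$, where $\sigma_1,\ldots,\sigma_d$ are the distinct embeddings of $k=\mathbb{Q}(u)$. Those $d$ extra coordinates are essential: at an archimedean place $v\in S_j$ the quantity $\rho_v(\alpha_i)q\sigma_j(u)-p_i$ is again a \emph{linear} form in $\mathbf{X}$, so the product over $v\in M_\infty$ collapses exactly to $|\alpha_i qu - p_i|$ via \eqref{eq2.9}, and the product formula for the $S$-unit $u$ cancels the $\sigma_j(u)$-factors. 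Your $(r+1)$-dimensional point $(qu:p_1:\cdots:p_r)$ cannot do this: at a non-identity place $v$, the coordinate $qu$ contributes $|q\rho_v(u)|_v$, which is not $|qu|_v$, and $X_j-\alpha_j X_0$ evaluated there gives $|p_j-\rho_v(\alpha_j)q\rho_v(u)|_v$, a quantity of the same order as $|\mathbf{x}|_v$, not small. The ``auxiliary small form'' at $v_1$ you gesture at, built from $|\sigma_{v_1}(\alpha_{i_0}qu)|\geq c$, gives a \emph{lower} bound, not smallness, and the balancing you acknowledge is needed is precisely what the $d$ conjugate coordinates provide automatically; I do not see how to recover it in dimension $r+1$.

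The second gap is more serious. You claim a linear relation $c_0 qu+\sum c_j p_j=0$ with $c_j\in K$ forces $c_0+\sum c_j\alpha_j=0$ and hence only finitely many further tuples, ``by projecting onto the $\mathbb{Q}$-span and invoking $\mathbb{Q}$-linear independence.'' This step is unjustified: the $c_j$ lie in $K$, not $\mathbb{Q}$, so $\mathbb{Q}$-linear independence of $\alpha_1,\ldots,\alpha_r$ says nothing directly (the paper's Claim~1 in Lemma~\ref{lem3.1} handles exactly this case and needs a separate Subspace-Theorem argument plus a reduction to rational coefficients). More importantly, in the paper the exceptional subspaces do \emph{not} contribute finitely many tuples. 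They contribute an infinite sub-family satisfying a non-trivial relation among $p_1,\ldots,p_r$ and $q\sigma_1(u),\ldots,q\sigma_d(u)$; Claims~1 and~2 together with the conclusion from \cite[Lemma~3]{corv} then show the $u$'s in that family lie (up to a fixed multiplier $u'$) in a \emph{proper subfield} $k'\subset k$. The theorem is then proved by iterating Lemma~\ref{lem3.1} to produce a strictly decreasing chain of subfields of $K$, which is impossible. Your proposal has no counterpart to this descent, and without it the argument does not terminate.
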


When we put   $r=1$ in Theorem \ref{maintheorem}, we recover one of the main results of Corvaja and Zannier in \cite{corv}.  As an application of Theorem \ref{maintheorem}, we  prove the following  transcendence criterion.

\begin{corollary}\label{cor1}
Let $\alpha_1,\ldots,\alpha_r$ be $\Q$-linearly independent real numbers.  Let  $\alpha$ be an algebraic number of degree $d\geq 2$  such that   one of its conjugates  $\beta $  satisfies  $|\beta| >|\alpha| > 1$. For some $\eta >0$,  suppose there exist infinitely many tuples $(n,q,p_1,\ldots,p_r)\in \mathbb{Z}_{>0}^2\times \mathbb{Z}^r$  satisfying  
$$
0<\left|\alpha_jq\alpha^n- {p_j}\right|<\frac{1}{H^\eta(\alpha^n) q^{\frac{d}{r}+\eta}}\quad\mbox{ for all integers~}  j = 1, \ldots, r,
$$
where $H(u)$ is the absolute Weil height of an algebraic number.  Then at least one of the numbers among $\alpha_1, \ldots, \alpha_r$  is transcendental.
\end{corollary}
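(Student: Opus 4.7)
\emph{Proof plan for Corollary~\ref{cor1}.}
The plan is to argue by contradiction using Theorem~\ref{maintheorem}. Suppose every $\alpha_j$ is algebraic; then $\alpha_1,\ldots,\alpha_r\in\overline{\mathbb Q}^{\times}$ are $\mathbb{Q}$-linearly independent, so the hypotheses on the $\alpha_j$'s in Theorem~\ref{maintheorem} are in force. Take $\Gamma=\langle\alpha\rangle$, a finitely generated subgroup of $\overline{\mathbb Q}^{\times}$, and, for each hypothesized tuple $(n,p_1,\ldots,p_r,q)$, set $u=\alpha^n\in\Gamma$. Because $[\mathbb{Q}(u):\mathbb{Q}]\leq d$, the inequality in Corollary~\ref{cor1} is at least as strong as inequality~\eqref{eq1.1} of Theorem~\ref{maintheorem} with $\varepsilon=\eta$, so once the $c$-pseudo-Pisot side condition is checked, Theorem~\ref{maintheorem} will force only finitely many such tuples, contradicting our infinite supply.

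\medskip

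To manufacture a conjugate of $\alpha_j qu$ of large modulus, enlarge to $L=\mathbb{Q}(\alpha_1,\ldots,\alpha_r,\alpha)$ and fix an embedding $\sigma:L\hookrightarrow\mathbb{C}$ extending the embedding $\alpha\mapsto\beta$ of $\mathbb{Q}(\alpha)$; set $v_j=\sigma(\alpha_j)\neq 0$. Then
$$
\sigma(\alpha_j qu)\;=\;v_j\,q\,\beta^n
$$
is a $\mathbb{Q}$-conjugate of $\alpha_j qu$ of modulus $|v_j|\,q\,|\beta|^n\geq |v_j||\beta|$. Pick any $c>0$ strictly below both $\min_j |\alpha_j|\alpha$ and $\min_j |v_j||\beta|$; then $|\alpha_j qu|>c$ and $|\sigma(\alpha_j qu)|\geq c$ for every $j$ and every $n,q\geq 1$.

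\medskip

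It remains to show that, for some $j$, this conjugate is genuinely distinct from $\alpha_j qu$ itself; then $\alpha_j qu$ has a Galois conjugate of absolute value $\geq c$ other than itself, so fails to be a $c$-pseudo-Pisot number. The equation $v_j q\beta^n=\alpha_j q\alpha^n$ rearranges to $v_j/\alpha_j=(\alpha/\beta)^n$; since $|\alpha/\beta|<1$, the number $\alpha/\beta$ is not a root of unity, so the values $(\alpha/\beta)^n$, $n\geq 1$, are pairwise distinct. Consequently the equation has at most one solution $n$ for each $j$, and it admits a simultaneous solution for all $j$ at most once (precisely when the ratios $v_j/\alpha_j$ all coincide with a common $(\alpha/\beta)^{n_0}$). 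Throwing out this at most single exceptional $n$, the remaining tuples in our hypothesized sequence satisfy the full hypothesis of Theorem~\ref{maintheorem}, and the resulting contradiction with its finiteness conclusion forces some $\alpha_j$ to be transcendental.

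\medskip

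The most delicate step --- and what I expect to be the main obstacle --- is the corner case in which $n$ is bounded in the given sequence and concentrates entirely at the exceptional value $n_0$, so that the discard argument above is vacuous. This degenerate situation can be addressed either by varying the embedding $\sigma$ (if some $\alpha_j\notin\mathbb{Q}(\alpha)$, different extensions of $\alpha\mapsto\beta$ act differently on $\alpha_j$ and yield a distinct conjugate) or, since $n$ is now fixed, by applying Roth's theorem to the simultaneous rational approximations $|\alpha_j\alpha^{n_0}q-p_j|$ obtained from the remaining infinite family.
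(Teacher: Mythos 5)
Your argument is essentially the paper's: assume all $\alpha_j$ algebraic, take $\Gamma=\langle\alpha\rangle$ and $u=\alpha^n$, apply Theorem~\ref{maintheorem}, and use an embedding $\sigma:\alpha\mapsto\beta$ to produce a conjugate of $\alpha_j q\alpha^n$ of too-large modulus. Your choice of a small $c$ (below $\min_j|\alpha_j|\alpha$ and $\min_j|\sigma(\alpha_j)||\beta|$) is a clean refinement of the paper's $c=1$, and your observation that $\sigma(\alpha_j qu)=\alpha_j qu$ can hold for all $j$ at most for one value $n_0$ is exactly the point the paper passes over when it asserts ``for infinitely many values of $n$.'' Regarding your flagged corner case (the sequence concentrating at $n=n_0$): the right remedy is the second one you mention — with $u=\alpha^{n_0}$ fixed, the infinitely many $(q,p_1,\ldots,p_r)$ give simultaneous rational approximations to the $\Q$-linearly independent numbers $\alpha_1\alpha^{n_0},\ldots,\alpha_r\alpha^{n_0}$, and the Subspace-Theorem argument forcing a contradiction is precisely Claim~0 in the proof of Lemma~\ref{lem3.1}; in fact Theorem~\ref{maintheorem} already internalizes this case, so one could also note that Theorem~\ref{maintheorem} rules out an infinite family with bounded $H(u)$ outright. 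Your first remedy (varying $\sigma$) only works when some $\alpha_j\notin\Q(\alpha)$, so it is not a complete fix on its own.
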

 Corollary \ref{cor1} is an extension of a result due to Han\v{c}l,  Kolouch, Pulcerov\'a and \v{S}t\v{e}pni\v{c}ka in \cite{hancl}.  It is important to note that   Corvaja and Han\v{c}l \cite{cH}, in 2007,   proved the transcendence of infinite product using the new diophantine approximation result proved  by  Corvaja and Zannier \cite{corv} for the first time in the literature.  As an example, with the similar spirits in \cite{hancl}, one can conclude the following transcendental result. 
 \smallskip
 
 {\it Let $\alpha > 1$ be a real algebraic number of degree $d\geq 2$ such that a conjugate $\beta$ of $\alpha$ satisfying $|\beta| >\alpha$ and let $r\geq 2$ be an integer. Let $\delta >0$ and $\epsilon > 0$ be real numbers satisfying $\displaystyle \frac{1+\frac{d}r + \delta}{\frac dr+1} \cdot \frac{\epsilon}{1+\epsilon} > 1$. Let $(a_n)_{n\geq 1}$ be a sequence of positive integers  and for $i = 1, 2, \ldots, r$, let  $(b^{(i)}_n)_{n\geq 1}$ be  a sequence of positive integers. Let $B_n^{(i)} = b_n^{(i)}\alpha^{a_n}$ for all $n\geq 1$ and for all $i = 1, 2, \ldots, r$. Suppose the sequence $(B_n^{(i)})_{n\geq 1}$ is non-decreasing  and satisfying the growth conditions 
 $$
 \limsup_{n\to\infty}\left(B_n^{(i)}\right)^{\frac{1}{(2+(d/r)+\delta)^n}} = \infty \mbox{ and } B_n^{(i)} \gg n^{1+\epsilon} 
 $$
 for all $i = 1, 2, \ldots, r$.  If  $\alpha_i = \displaystyle \prod_{n=1}^\infty\frac{[B_n^{(i)}]}{B_n^{(i)}}$ for all $i = 1, 2, \ldots, r$, then either at least one of the numbers among $\alpha_1, \ldots, \alpha_r$ is transcendental or they are $\mathbb{Q}$-linearly dependent.  } Here $[x]$ denotes the integral part of the real number $x$.

\bigskip

Let $K$ be a number field and  let $S$  be a finite set of places on $K$  such that $S$ contains all the archimedean valuations of $K$.  The group of $S$-units, denoted by $\mathcal{O}^\times_S$, is defined as
$$
\mathcal{O}^\times_S=\{\alpha\in K  :   |\alpha|_\mathit{v}=1\quad\mbox{for all~~}\mathit{v}\notin S\}.
$$
Now, we shall state the other result of this article as follows. 
\begin{theorem}\label{maintheorem1} 
Let $K$  be a number field of degree $n$ which is Galois  over $\mathbb{Q}$    and let $S$  be a finite set of places on $K$  such that $S$ contains all the archimedean places of $K$.   Let $d$ be a divisor of $n$  and  let $\alpha_1,\ldots,\alpha_d\in K$ be given algebraic numbers and not all zero. For a given real number $\varepsilon >0$, let $\mathcal{B}$ be a subset of $\mathcal{O}_S^\times\times\mathbb{Z}^2$ which consists of   triples $(u_1, q,  p)\in\mathcal{O}_S^\times\times\mathbb{Z}^2$ with $d=[\mathbb{Q}(u_1):\mathbb{Q}]$  for some integer $d\geq 1$ satisfying   $|\alpha_i q u_i|>1$, $\alpha_iq u_i$  is not a  pseudo-Pisot number for some integer $i\in\{1,2,\ldots,d\}$   and
\begin{equation*}
0<|\alpha_1 q u_1+ \alpha_2qu_2+\cdots+\alpha_d q u_d-p|\leq 
\frac{1}{H^\varepsilon(u_1)|q|^{d+\varepsilon}},
\end{equation*}
where  $u_1, u_2,\ldots, u_d$  are all Galois conjugates. Then $\mathcal{B}$ is a finite set. 
\end{theorem}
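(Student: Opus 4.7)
The plan is to mimic the Corvaja--Zannier strategy used for Theorem~\ref{maintheorem}, but with a single linear form at the archimedean place that already incorporates all Galois conjugates of $u$, rather than a family of simultaneous approximations. To each triple $(u,q,p)$ from a putative infinite family I associate the $(d+1)$-tuple
$$
\mathbf{x}\;:=\;(p,\,qu_{1},\,qu_{2},\,\ldots,\,qu_{d})\in K^{d+1},\qquad u_{1}:=u,
$$
and apply the absolute form of Schmidt's Subspace Theorem over $K$ with the given finite set of places $S$, enlarged (without loss of generality) so that each $\alpha_{i}$ is an $S$-integer. Since $K/\mathbb{Q}$ is Galois and $u\in\mathcal{O}_{S}^{\times}$, each conjugate $u_{i}$ also lies in $\mathcal{O}_{S}^{\times}$, so the coordinates of $\mathbf{x}$ are $S$-integers.

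At a distinguished archimedean place $v_{0}$ of $K$ (corresponding to the identity embedding) I take the forms
$$
L_{0,v_{0}}(\mathbf{X})=X_{0}-\alpha_{1}X_{1}-\cdots-\alpha_{d}X_{d},\qquad L_{i,v_{0}}(\mathbf{X})=X_{i}\ \ (i=1,\ldots,d),
$$
and at every other $v\in S$ I take $L_{i,v}(\mathbf{X})=X_{i}$ for $0\le i\le d$; these are linearly independent at each $v$. The hypothesis bounds $|L_{0,v_{0}}(\mathbf{x})|_{v_{0}}$ above by $H(u)^{-\varepsilon}q^{-d-\varepsilon}$, while the product formula applied to each $qu_{i}\in K^{\times}$ together with the $S$-unit property of $u_{i}$ (so $|u_{i}|_{v}=1$ for $v\notin S$) controls the contributions of the remaining forms. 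Combined with the elementary height bound $H(\mathbf{x})\ll qH(u)$, this makes the Subspace Theorem product strictly smaller than $H(\mathbf{x})^{-\varepsilon'}$ for a suitable $\varepsilon'>0$, forcing all but finitely many $\mathbf{x}$ into a finite union of proper $K$-subspaces of $K^{d+1}$.

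Each such subspace is cut out by a fixed nontrivial relation $c_{0}p+\sum_{i=1}^{d}c_{i}qu_{i}=0$. If $c_{0}=0$ I obtain a nontrivial $K$-linear dependence $\sum c_{i}u_{i}=0$ among the conjugates of $u$; conjugating by $\mathrm{Gal}(K/\mathbb{Q})$ and using $[\mathbb{Q}(u):\mathbb{Q}]=d$ confines $u$ to a proper closed subvariety, whose intersection with $\mathcal{O}_{S}^{\times}$ is finite by the theorem of Evertse--van der Poorten--Schlickewei on $S$-unit equations. If $c_{0}\neq 0$, then $p=-c_{0}^{-1}\sum c_{i}qu_{i}$ is determined by $u$ and $q$ up to the finitely many choices of $(c_{0},\ldots,c_{d})$; substituting back into the original inequality and then conjugating by the remaining embeddings of $K$ shows that each $\alpha_{i}qu_{i}$ either has absolute value at most $c$ or has an integer trace together with all other conjugates of absolute value strictly less than $c$, i.e.\ is a $c$-pseudo-Pisot number, contradicting the standing hypothesis.

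The principal technical difficulty lies in the third paragraph: passing from a purely $K$-linear relation to the genuine $\mathbb{Z}$-integrality and conjugate-smallness conditions that define a $c$-pseudo-Pisot number. The divisibility assumption $d\mid n$ enters precisely here, ensuring that every conjugate of $u$ lives in $K$ so that $\mathrm{Gal}(K/\mathbb{Q})$ permutes the coefficient vector $(c_{0},c_{1},\ldots,c_{d})$ and the subspace relation can be converted into Galois-equivariant statements; the degenerate cases (several $c_{i}$ vanishing, or some $|\alpha_{i}qu_{i}|\le c$ for reasons unrelated to the subspace relation) require case-splitting that parallels, but is more intricate than, the corresponding analysis in \cite{corv}.
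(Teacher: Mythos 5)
Your high-level framework (build a point $\mathbf{x}=(p,qu_1,\ldots,qu_d)$, apply the Subspace Theorem, analyse the resulting linear relation) matches the paper's strategy, but the proposal has several genuine gaps that prevent it from being a proof.

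First, and most seriously, you treat one application of the Subspace Theorem as terminal. The Subspace Theorem only confines infinitely many $\mathbf{x}$ to finitely many proper subspaces; it does not give finiteness. The paper's proof is structured around Lemma \ref{lem3.2}, whose conclusion is not finiteness but a \emph{descent}: there exist a proper subfield $k'\subsetneq k$, a fixed $u'\in k$ and an infinite $\mathcal{B}'$ with $u/u'\in k'$. The proof of Theorem \ref{maintheorem1} then iterates this descent and reaches a contradiction because $K$ admits no infinite strictly decreasing chain of subfields. This inductive mechanism is entirely absent from your argument, and neither of your two cases can replace it.

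Second, your handling of the case $c_0=0$ is incorrect. You claim that a relation $\sum_i c_i\sigma_i(u)=0$ for infinitely many $u\in\mathcal{O}_S^\times$ is finite by the Evertse--van der Poorten--Schlickewei theorem. That theorem applies to \emph{independent} $S$-unit variables; here the coordinates $\sigma_1(u),\ldots,\sigma_d(u)$ are conjugates of a single $u$, so the only conclusion (Lemma \ref{lem2.2}) is that some two-term subsum vanishes, $a\sigma_i(u)+b\sigma_j(u)=0$, for infinitely many $u$. This yields $\sigma_i^{-1}\sigma_j(u/u'')=u/u''$, i.e.\ $u/u''$ lies in a proper subfield fixed by $\langle\mathcal{H},\sigma_i^{-1}\sigma_j\rangle$ -- precisely the descent conclusion, not finiteness. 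There may be infinitely many such $u$.

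Third, the case $c_0\neq 0$ is only gestured at. The paper's treatment (inside Lemma \ref{lem3.2}) splits on whether $\lambda=-a_1/a_0$ lies in $k$, produces a \emph{new} inequality \eqref{eq2.30} after adding $-\sum\alpha_i q\sigma_i(u)$, introduces $\beta_i=\sigma_i(\lambda)-\alpha_i$, and then uses the non-$c$-pseudo-Pisot hypothesis only to produce a \emph{lower bound} on $\max_j|\sigma_j(u)|$ so that Lemma \ref{lem2.1} becomes applicable. Your claim that ``conjugating by the remaining embeddings shows $\alpha_iqu_i$ is a $c$-pseudo-Pisot number, a contradiction'' does not correspond to any such short argument, and the paper's Lemma \ref{lem3.2} again ends with descent, not a direct contradiction.

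Finally, a technical point: you place a single nontrivial form $L_{0,v_0}$ at one distinguished archimedean place. With the normalization \eqref{eq2.2}, that contributes only $|p-\sum\alpha_i qu_i|^{d(\rho_{v_0})/[K:\Q]}$, a tiny fraction of the smallness available. The paper instead uses the conjugated forms $L_{v,0}=-X_0+\rho_v(\alpha_1)X_1+\cdots+\rho_v(\alpha_d)X_d$ at every archimedean $v$ and the identity $\prod_{j}\prod_{v\in S_j}|\cdot|_v=|\alpha_1 qu_1+\cdots+\alpha_d qu_d-p|$ (equation \eqref{eq2.24}), which is needed to reach the exponent $d+1+\varepsilon'$ on $H(\mathbf{X})$. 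Your choice of forms is likely too weak to trigger the Subspace Theorem at all.
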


\section{Preliminaries} 

Let $K\subset \mathbb{C}$ be a  number field which is  Galois  over $\mathbb{Q}$ with Galois group $\mbox{Gal}(K/\mathbb{Q})$. Let $M_K$ be the set of all inequivalent places of $K$ and $M_\infty$ be the set of all archimedean places of $K$. For each place $v\in M_K$, we denote $|\cdot |_v$ the absolute value corresponding to $v$, normalized with respect to $K$. Indeed if $v\in M_\infty$, then there exists an automorphism $\sigma\in\mbox{Gal}(K/\mathbb{Q})$ of $K$ such that for all $x\in K$, 
\begin{equation}\label{eq2.1}
|x|_v=|\sigma(x)|^{d(\sigma)/[K:\mathbb{Q}]},
\end{equation}
where $d(\sigma) =1$ if $\sigma(K) = K\subset \mathbb{R}$ and $d(\sigma) = 2$ otherwise. Note that $d(\sigma)$ is constant since $K/\mathbb{Q}$ is Galois. Non-archimedean absolute values are normalized accordingly so that  the product formula   $\displaystyle\prod_{\omega\in M_K}|x|_\omega=1$ holds for any  $x\in K^\times$.

The absolute Weil  height  $H(x)$ is defined as 
$$
H(x):=\prod_{\omega\in M_K}\mbox{max}\{1,|x|_\omega\} \mbox{ for all } x\in K.
$$
For a vector $\textbf{x} =(x_1,\ldots,x_n)\in K^n$  and for a place $\omega\in M_K$, the $\omega$-norm for  $\textbf{x}$, denoted by $||\textbf{x}||_\omega$, is defined by 
$$
\Vert\textbf{x}\Vert_\omega:=\mbox{max}\{|x_1|_\omega,\ldots,|x_n|_\omega\}
$$
and  the projective height,  $H(\textbf{x})$, is defined by 
$$
H(\textbf{x})=\prod_{\omega\in M_K}\Vert\textbf{x}\Vert_\omega.
$$
Now we are ready to state a more general version  of the Schmidt Subspace Theorem, which was  formulated by    Evertse and Schlickewei.  For a reference,  see \cite[Chapter 7]{bomb},  \cite[Chapter V, Theorem $1D^\prime$]{schmidt}, and  \cite[Page 16, Theorem II.2]{zannier}.

\smallskip

\begin{theorem} (Subspace Theorem) \label{schli}
 Let $K$ be a number field and $m \geq 2$ be an integer. Let $S$ be a finite set of places on $K$ which contains all the archimedean  places of $K$.  For each $v \in S$, let $L_{1,v}, \ldots, L_{m,v}$ be linearly independent linear  forms in the variables $X_1,\ldots,X_m$ with  coefficients in $K$.  For any $\varepsilon>0$, the set of solutions $\mathbf{x} \in K^m$ to the inequality 
\begin{equation*}
\prod_{v\in S}\prod_{i=1}^{m} \frac{|L_{i,v}(\mathbf{x})|_v}{\Vert\mathbf{x}\Vert_v} \leq \frac{1}{H(\mathbf{x})^{m+\varepsilon}}
\end{equation*}
lies in finitely many proper subspaces of $K^m$.
\end{theorem}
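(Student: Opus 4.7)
\medskip\noindent\textbf{Proof plan.} The strategy is a Subspace Theorem argument applied to the vector that packages $p$ together with $q$ times the entire Galois orbit of $u$. Arguing by contradiction, I assume there are infinitely many triples $(u,q,p)\in\mathcal{O}_S^\times\times\mathbb{Z}^2$ satisfying the hypotheses. After enlarging $S$ (which only weakens the conclusion), I may assume $S$ is $\mathrm{Gal}(K/\mathbb{Q})$-stable, so that every conjugate $u_i$ of $u$ also lies in $\mathcal{O}_S^\times$. To each solution I attach the vector $\mathbf{x}=(qu_1,qu_2,\dots,qu_d,p)\in K^{d+1}$ and apply Theorem \ref{schli} in dimension $m=d+1$.

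Fix an archimedean place $v_0\in S$ at which the hypothesized approximation is registered. At $v_0$ I would use the $d+1$ linearly independent forms
$L_{0,v_0}(X_1,\dots,X_d,Y)=\alpha_1X_1+\cdots+\alpha_dX_d-Y$
and $L_{i,v_0}(\mathbf{X})=X_i$ for $i=1,\dots,d$; independence holds since the $\alpha_i$ are not all zero. At each $v\in S\setminus\{v_0\}$ I use the coordinate forms $Y,X_1,\dots,X_d$. The technical heart of the argument is to verify
$$\prod_{v\in S}\prod_{i=0}^{d}\frac{|L_{i,v}(\mathbf{x})|_v}{\|\mathbf{x}\|_v}\;\le\;\frac{1}{H(\mathbf{x})^{d+1+\varepsilon'}}$$
for some $\varepsilon'>0$. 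The smallness at $v_0$ comes directly from the hypothesis, supplying a factor $\le(H(u)^\varepsilon q^{d+\varepsilon})^{-1}$; the product formula applied to $q\in\mathbb{Z}$ and to each $u_i\in\mathcal{O}_S^\times$ eliminates the remaining $|q|_v$ and $|u_i|_v$ contributions at places outside $S$; and an upper bound of the form $H(\mathbf{x})\ll q\,H(u)^{\kappa}$, for a suitable $\kappa$ depending on $d$, converts the hypothesis exponent $d+\varepsilon$ into the required subspace exponent $d+1+\varepsilon'$.

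The Subspace Theorem then forces infinitely many $\mathbf{x}$ to lie in a single proper $K$-subspace, yielding a fixed tuple $(c_0,c_1,\dots,c_d)\in K^{d+1}\setminus\{0\}$ with $c_0 p+c_1 q u_1+\cdots+c_d q u_d=0$ for infinitely many solutions. If $c_0=0$, one has a nontrivial $K$-linear relation $\sum_i c_i u_i=0$; applying $\mathrm{Gal}(K/\mathbb{Q})$ and a Vandermonde argument forces $u$ into a proper subfield of $\mathbb{Q}(u)$, contradicting $[\mathbb{Q}(u):\mathbb{Q}]=d$. If $c_0\ne0$, subtracting the relation from the approximation produces $\bigl|q\sum_i(\alpha_i+c_i/c_0)u_i\bigr|<(\text{small})$; if every coefficient $\alpha_i+c_i/c_0$ vanishes then $p=q\sum\alpha_i u_i$, contradicting the strict lower bound $0<|\alpha_1 q u+\cdots-p|$. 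Otherwise, the Galois conjugates of the nonzero linear form $\sum_i(\alpha_i+c_i/c_0)u_i$, combined with the hypothesis that some $\alpha_j q u_j$ fails to be $c$-pseudo-Pisot (so some conjugate of this combination must be of size at least $c$), produce a contradiction with a Liouville-type lower bound.

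The main obstacle, exactly as in the companion Theorem \ref{maintheorem}, lies in this last step: converting the $K$-linear relation extracted from the Subspace Theorem into a definitive contradiction. The non-pseudo-Pisot hypothesis is deployed precisely to rule out a conspiracy in which all Galois conjugates of $\alpha_j q u_j$ are simultaneously smaller than $c$; carefully tracking how the coefficients $c_i/c_0$ interact with the Galois action on $u$---possibly by iterating the Subspace Theorem, or by invoking a quantitative linear-independence lemma in the spirit of Corvaja and Zannier---is the most delicate ingredient of the proof.
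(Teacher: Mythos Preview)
Your proposal does not address the stated theorem. Theorem~\ref{schli} is the Schmidt--Schlickewei--Evertse Subspace Theorem itself, a deep foundational result that the paper does not prove but merely cites (referring to \cite{bomb}, \cite{schmidt}, \cite{zannier}). What you have sketched is instead a proof of Theorem~\ref{maintheorem1} (more precisely, of its key Lemma~\ref{lem3.2}), which is an \emph{application} of the Subspace Theorem. The hypotheses you assume---infinitely many triples $(u,q,p)\in\mathcal{O}_S^\times\times\mathbb{Z}^2$ with $|\alpha_1 q u+\cdots+\alpha_d q u_d-p|$ small---are simply not the hypotheses of Theorem~\ref{schli}.

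Even as a proof plan for Theorem~\ref{maintheorem1}, there is a gap worth flagging: you deploy the special linear form $\alpha_1X_1+\cdots+\alpha_dX_d-Y$ only at a single archimedean place $v_0$. With the normalized valuations $|x|_v=|\rho_v^{-1}(x)|^{d(\rho_v)/[K:\mathbb{Q}]}$, the smallness at one place contributes only the fractional power $|\alpha_1qu_1+\cdots-p|^{d(\rho_{v_0})/[K:\mathbb{Q}]}$, which is in general not enough to reach the exponent $d+1+\varepsilon'$. The paper's Lemma~\ref{lem3.2} instead uses the conjugated form $-x_0+\rho_v(\alpha_1)x_1+\cdots+\rho_v(\alpha_d)x_d$ at \emph{every} archimedean $v$, so that the product over $M_\infty$ recovers the full real quantity $|\alpha_1qu_1+\cdots+\alpha_dqu_d-p|$ (equation~\eqref{eq2.24}). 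The remainder of your outline---extracting a $K$-linear relation, splitting on whether the coefficient of $p$ vanishes, and exploiting the non-pseudo-Pisot hypothesis via Lemmas~\ref{lem2.1} and~\ref{lem2.2}---does broadly match the paper's strategy for Lemma~\ref{lem3.2}, though the descent to a proper subfield (the actual conclusion of that lemma, feeding the induction in the proof of Theorem~\ref{maintheorem1}) is not visible in your sketch.
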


The following lemma is an application of Theorem \ref{schli}  which can be deduced from  results obtained by Evertse. For a proof, we refer to \cite[Lemma 1]{corv}.

\begin{lemma}\label{lem2.1} 
Let $K$ be a number field which is  Galois over $\mathbb{Q}$  and $S$  be a finite subset of places which contains all the archimedean places.  Let $\sigma_1,\ldots,\sigma_n$  be distinct automorphisms of $K$ for some integer $n\geq 1$ and let $\lambda_1,\ldots,\lambda_n$  be non-zero elements of $K$. Let  $\varepsilon>0$  be a given real number and $\omega\in S$  be a distinguished place.  Let $\mathfrak{E}\subset \mathcal{O}_S^\times$ be a subset which is defined as  
$$
\mathfrak{E} := \left\{ u\in \mathcal{O}_S^\times \ :  \ |\lambda_1 \sigma_1(u)+\cdots+\lambda_n\sigma_n(u)|_\omega<\frac{1}{H^{\varepsilon}(u)}\max\{|\sigma_1(u)|_\omega,\ldots,|\sigma_n(u)|_\omega\}\right\}.
$$
If $\mathfrak{E}$ is an infinite subset of $\mathcal{O}_S^\times$, then there exist $a_1, \ldots, a_n\in K$, not all zero, such that 
$$
a_1\sigma_1(v)+\cdots+a_n\sigma_n(v)=0 
$$
holds true for infinitely many elements  $v\in \mathfrak{E}$.
\end{lemma}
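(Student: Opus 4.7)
The plan is to apply the Subspace Theorem (Theorem~\ref{schli}) to the vectors $\mathbf{x}_u := (\sigma_1(u),\dots,\sigma_n(u)) \in K^n$ as $u$ ranges over $\mathfrak{E}$. As a preliminary step I would enlarge $S$, if necessary, to be stable under $\mathrm{Gal}(K/\mathbb{Q})$; this is harmless because it only enlarges $\mathcal{O}_S^\times$ and the hypothesis on $u$ is preserved. Galois-stability guarantees that every conjugate $\sigma_i(u)$ is again an $S$-unit, that $\|\mathbf{x}_u\|_v = 1$ for $v \notin S$, and hence that $H(\mathbf{x}_u) = \prod_{v \in S} \|\mathbf{x}_u\|_v$. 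Because the linear forms fed to the Subspace Theorem must be fixed independently of $u$, I would partition $\mathfrak{E} = \bigcup_{j=1}^{n} \mathfrak{E}_j$, where $\mathfrak{E}_j := \{u \in \mathfrak{E} : |\sigma_j(u)|_\omega = \|\mathbf{x}_u\|_\omega\}$, and by pigeonhole fix a single index $j$ for which $\mathfrak{E}_j$ is still infinite.

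With $j$ fixed, at the distinguished place $\omega$ I take the $n$ linear forms $L(X) := \lambda_1 X_1 + \cdots + \lambda_n X_n$ together with the coordinate forms $X_i$ for $i \neq j$; these are linearly independent precisely because $\lambda_j \neq 0$. At every other $v \in S$ I use the standard coordinate forms $X_1,\dots,X_n$. The heart of the argument is the product-formula collapse
\[
\prod_{v \in S} \prod_{i=1}^{n} |L_{i,v}(\mathbf{x}_u)|_v \;=\; \frac{|L(\mathbf{x}_u)|_\omega}{|\sigma_j(u)|_\omega},
\]
obtained by using $\prod_{v \in S} |\sigma_i(u)|_v = 1$ for each $S$-unit $\sigma_i(u)$. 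Dividing by $\prod_{v\in S}\|\mathbf{x}_u\|_v^{\,n}=H(\mathbf{x}_u)^n$, invoking the defining inequality of $\mathfrak{E}$ and the choice of $j$ to replace the right-hand side by something smaller than $\|\mathbf{x}_u\|_\omega/(H(u)^{\varepsilon}|\sigma_j(u)|_\omega)=1/H(u)^{\varepsilon}$, and finally using the elementary comparison $H(\mathbf{x}_u) \leq H(u)^n$ (a consequence of the Galois-invariance of $H$ together with $\max_i|\sigma_i(u)|_v\leq\prod_i\max(1,|\sigma_i(u)|_v)$), the full quantity is bounded above by $H(\mathbf{x}_u)^{-(n+\varepsilon/n)}$, which matches the Subspace Theorem hypothesis with the positive parameter $\varepsilon/n$.

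Theorem~\ref{schli} now forces the collection $\{\mathbf{x}_u : u \in \mathfrak{E}_j\}$ into finitely many proper subspaces of $K^n$, so some such subspace contains $\mathbf{x}_u$ for infinitely many $u$. That subspace is cut out by a nonzero linear form $a_1 X_1 + \cdots + a_n X_n$, which is exactly the claimed nontrivial relation $a_1\sigma_1(v)+\cdots+a_n\sigma_n(v)=0$ valid for infinitely many $v \in \mathfrak{E}_j\subset\mathfrak{E}$. The main obstacle I expect is the bookkeeping around the product-formula collapse and the height comparison: one must keep track of enough factors to guarantee that the final Subspace Theorem exponent strictly exceeds $n$, which is what unlocks the inclusion in finitely many subspaces. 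The pigeonhole partition that lets one freeze the linear forms independently of $u$ is conceptually the only delicate design choice; once it is in place, the rest is a routine application of Theorem~\ref{schli}.
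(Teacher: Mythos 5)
Your proof is correct and follows essentially the same Subspace-Theorem argument as \cite[Lemma 1]{corv}, which is the proof the paper refers to: the pigeonhole decomposition according to which $|\sigma_j(u)|_\omega$ attains the maximum, the linear forms $L$ together with the coordinate forms $X_i$ ($i\neq j$) at $\omega$, the product-formula collapse using that the $\sigma_i(u)$ are $S$-units, and the comparison $H(\mathbf{x}_u)\leq H(u)^n$ are exactly the steps there. The only minor point worth noting is that the $n=1$ case must be dispatched separately (the Subspace Theorem needs $m\geq 2$), but there $\mathfrak{E}$ is automatically finite by Northcott since the defining inequality forces $H(u)$ bounded.
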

We also need the following lemma, which is  a  special case of  the $S$-unit equation theorem proved by Evertse and van der Poorten-Schlickewei. For a proof, we refer to \cite[page 18, Theorem II.4]{zannier}. 
\begin{lemma}\label{lem2.2}
Let $K, S$  and $\sigma_1,\ldots,\sigma_n$  be as Lemma \ref{lem2.1}. Let  $a_1,\ldots,a_n$  be non-zero elements of $K$. Let $\mathfrak{E}\subset \mathcal{O}_S^\times$ be a  subset which is defined as 
$$
\mathfrak{E} := \left\{u\in \mathcal{O}_S^\times\ : \  a_1\sigma_1(u)+\cdots+a_n\sigma_n(u)=0 \right\}.
$$
If $\mathfrak{E}$ is an infinite set, then there exist  integers $i\neq j$ satisfying $1\leq i < j \leq n$, non-zero elements $a,b\in K^\times$ and infinitely many $v\in \mathfrak{E}$  satisfying 
$$
a\sigma_i(v)+b\sigma_j(v)=0.
$$
\end{lemma}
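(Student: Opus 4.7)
The plan is to deduce Lemma \ref{lem2.2} from the nondegenerate $S$-unit equation theorem of Evertse and van der Poorten--Schlickewei, by induction on $n$ coupled with a dichotomy on vanishing subsums.

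First I would reduce to the case in which $S$ is stable under $\mathrm{Gal}(K/\mathbb{Q})$; enlarging $S$ only enlarges $\mathcal{O}_S^\times$ and preserves the infiniteness of $\mathfrak{E}$. Under this assumption each $\sigma_i$ maps $\mathcal{O}_S^\times$ into itself, so for every $u\in\mathfrak{E}$ the vector $(\sigma_1(u),\dots,\sigma_n(u))\in(\mathcal{O}_S^\times)^n$ is a solution in $S$-units of the linear equation $a_1x_1+\cdots+a_nx_n=0$. The base case $n=2$ is immediate: the relation $a_1\sigma_1(u)+a_2\sigma_2(u)=0$ itself furnishes the required $a=a_1$, $b=a_2$, $i=1$, $j=2$.

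For $n\ge 3$ the argument proceeds by induction. Since there are only finitely many nonempty proper subsets $I\subsetneq\{1,\dots,n\}$, after passing to an infinite subset of $\mathfrak{E}$ exactly one of two cases occurs. In Case~(i), there is a fixed $I$ with $2\le|I|<n$ such that $\sum_{i\in I}a_i\sigma_i(u)=0$ for every $u$ in an infinite sub-collection (the possibility $|I|=1$ is excluded because $a_i\sigma_i(u)\ne 0$); applying the inductive hypothesis to the smaller family $(\sigma_i)_{i\in I}$ then produces the desired two-term relation. In Case~(ii), for all but finitely many $u\in\mathfrak{E}$ no proper subsum of $a_1\sigma_1(u)+\cdots+a_n\sigma_n(u)$ vanishes. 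The nondegenerate $S$-unit equation theorem forces the projective points $[\sigma_1(u):\cdots:\sigma_n(u)]\in\mathbb{P}^{n-1}(K)$ to attain only finitely many values on this cofinite subset, so some fixed point $[\xi_1:\cdots:\xi_n]$ with all $\xi_j\ne 0$ is attained on an infinite sub-collection. For every such $u$ the ratio $\sigma_1(u)/\sigma_2(u)=\xi_1/\xi_2$ is constant, which is the sought non-trivial two-term identity $\xi_2\sigma_1(u)-\xi_1\sigma_2(u)=0$.

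The substantive input, and the only real content, is the nondegenerate $S$-unit equation theorem (itself a consequence of the Subspace Theorem); granted that, the argument is essentially bookkeeping. The main subtle point is in Case~(i): one must verify that restricting to indices in $I$ preserves the hypotheses of the lemma, i.e., that the $\sigma_i$ with $i\in I$ remain pairwise distinct and the corresponding $a_i$ remain nonzero. Both are immediate from the construction, so the induction proceeds without obstruction.
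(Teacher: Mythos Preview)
Your argument is correct. The paper does not actually prove this lemma: it simply states that the result is a special case of the $S$-unit equation theorem of Evertse and van der Poorten--Schlickewei and refers the reader to \cite[Page 18, Theorem II.4]{zannier}. Your proposal supplies precisely the short deduction the paper omits, and uses exactly the tool the paper cites. The induction with the vanishing-subsum dichotomy is the standard way to extract a two-term relation from the finiteness of nondegenerate projective $S$-unit solutions, and your handling of both cases is sound. One minor remark: when you enlarge $S$ to a Galois-stable $S'$, be explicit that you continue to work with the \emph{original} infinite set $\mathfrak{E}\subset\mathcal{O}_S^\times\subset\mathcal{O}_{S'}^\times$ throughout (so the infinitely many $v$ you produce lie in the original $\mathfrak{E}$); the enlargement is only needed to guarantee $\sigma_i(u)\in\mathcal{O}_{S'}^\times$ for the application of the unit-equation theorem. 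With that clarification your write-up is complete and matches the paper's intended appeal to the cited result.
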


We shall start with the following observation.

\begin{lemma}\label{newlemma}
Let $K$ be a number field  of degree $n$ which is  Galois  over $\mathbb{Q}$  and $k\subset K$  be a subfield of degree $d$  over $\mathbb{Q}$ for some integer $d\geq 2$. Let $\alpha_1,\ldots,\alpha_r$ be  $\mathbb{Q}$-linearly independent elements of $K$ for some integer $r\geq 1$.  Let $S$  be a finite set of places on $K$ which contains all the archimedean places and  let $\varepsilon>0$   be a given real number.   Let 
\begin{equation}\label{eq2.3}
\mathcal{B} = \left\{(u, q, p_1,\ldots,p_r)\in (\mathcal{O}_S^\times\cap k)\times\mathbb{Z}^{r+1} \ : \   
0<|\alpha_i qu-p_i|<\frac{1}{H^\varepsilon(u)|q|^{\frac{d}{r}+\epsilon}} \mbox{ for all } 1\leq i\leq r\right\}
\end{equation}
be a subset of $(\mathcal{O}_S^\times\cap k)\times\mathbb{Z}^{r+1}$. If $\mathcal{B}$ is an infinite set, then $H(u) \to\infty$ as $u$ varies over all the tuples $(u,q, p_1, \ldots, p_r) \in \mathcal{B}$.
 \end{lemma} 
 \begin{proof}
 If possible,  $H(u)$ is bounded as $u$ varies over all the tuples $(u, q, p_1, \ldots, p_r) \in\mathcal{B}$.  Then there exists a fixed $u$, say,   $u_0$ and an infinite subset $\mathcal{A}$ of $\mathcal{B}$ such that if  $(u, q, p_1, \ldots, p_r) \in \mathcal{A}$, then $u = u_0$ and satisfying  
\begin{equation}\label{eq2.4}
0<|\alpha_i q u_0-p_i|<\frac{1}{H^{\epsilon}(u_0)|q|^{\frac{d}{r}+\varepsilon}},\quad\mbox{for all~~}1\leq i\leq r
\end{equation}
holds true for all tuples $(u_0,q,p_1,\ldots,p_r)\in\mathcal{A}$. Since $H(u_0)$ is a constant and if $d/r\geq 1$,  then, by Roth's theorem,  we conclude that $\alpha_1 u_0, \ldots,\alpha_r u_0$  are all transcendental which is   a contradiction.   Hence,  we  can assume that $d/r<1$.  

The equation \eqref{eq2.4} implies that $\alpha_1u_0, \ldots, \alpha_ru_0$ have simultaneous rational approximation with common denominator $q$ whose exponent is $1+d/r +\epsilon = 1+\delta$ where $\delta \geq 1/r$. By the well-known application of the subspace theorem on simultaneous approximation with common denominator, one gets either one fo the numbers $\alpha_1u_0, \ldots, \alpha_ru_0$ is  transcendental or $1, \alpha_1u_0, \ldots, \alpha_ru_0$ are $\mathbb{Q}$-linearly dependent.  Since $\alpha_iu_0$ is algebraic for each integer $i\geq 1$, we conclude that $1, \alpha_1u_0, \ldots, \alpha_ru_0$ are $\mathbb{Q}$-linearly dependent. This does not contradict to the fact that $\alpha_1u_0, \ldots, \alpha_ru_0$ are $\mathbb{Q}$-linearly independent. To get a contradiction, we proceed as follows.
\smallskip

Now consider $r+1$ linearly independent linear forms with algebraic coefficients as   
$$
L_{0,\infty}(x_0, x_1,\ldots,x_r)=x_0 \mbox{ and } 
L_{i,\infty}(x_0, x_1,\ldots,x_r)=\alpha_i u_0 x_0-x_i 
$$
for all $i = 1,2,\ldots, r$. We  take  $K = \mathbb{Q}$ and $S = \{\infty\}$ in Theorem \ref{schli}.  Then by  \eqref{eq2.4}, we conclude that there are infinitely many integer tuples  $(q, p_1,p_2,\ldots ,p_r)$ where $(u_0, q, p_1,\ldots, p_r) \in\mathcal{A}$ satisfying Theorem \ref{schli}.  Therefore there exist $a_0, a_1, \ldots, a_r \in \mathbb{Z}$, not all zero, such that  \begin{equation}\label{eq2.5}
a_0 q+a_1 p_1+\cdots+a_r p_r=0
\end{equation}
holds true for all tuples $(u_0, q, p_1,\ldots, p_r) \in\mathcal{A}'$ where $\mathcal{A}'$ is an infinite subset of $\mathcal{A}$. Since not all $a_i$'s are  $0$   in \eqref{eq2.5}, we assume that   $a_{i_0}\neq 0$ for some integer $i_0$ satisfying $1\leq i_0\leq r$.
 
\bigskip

\noindent{\bf Claim.} There exist an infinite subset $\mathcal{A}''$ of $\mathcal{A}'$ and integers $b_1, \ldots, b_r$ (not all are zero) such that $b_1p_1+\cdots +b_rp_r = 0$ for all the tuples $(u_0, q, p_1,\ldots, p_r) \in \mathcal{A}''$ 

\bigskip

  Since  $d/r<1$ and $r, d\geq 2$, we see that $\displaystyle\frac{(r-1)d}{r}>1$.  We consider $r$ linearly independent linear forms with algebraic coefficients as 
$$
L_{i,\infty}(x_1,x_2, \ldots ,x_r)=\alpha_i u_0 x_{i_0}-x_i  \mbox{  for all  }  i = 1,2,\ldots, i_0-1, i_0+1,\ldots, r, \mbox{ and }  L_{i_0,\infty}(x_1,x_2, \ldots ,x_r)=x_{i_0}.
$$
 Now, we take $K = \mathbb{Q}$ and $S = \{\infty\}$ in Theorem \ref{schli}.  In order to conclude the assertion of Theorem \ref{schli}, we need to estimate the following quantity
$$
\prod_{i=1}^r|L_{i,\infty}(p_1,p_2,\ldots,p_{i_0-1},q, p_{i_0+1},\ldots,p_{r})| 
$$
for all the integer tuples $(p_1,\ldots, p_{i_0-1}, q, p_{i_0+1}, \ldots ,p_r)$ where $(u_0, q$, $p_1,\ldots, p_r) \in\mathcal{A}'$. By 
 \eqref{eq2.4}, we get 
$$
\prod_{i=1}^r|L_{i,\infty}(p_1,p_2,\ldots,p_{i_0-1},q, p_{i_0+1},\ldots,p_{r})|<\frac{1}{H^\varepsilon(u_0)}\frac{|q|}{|q|^{\frac{(r-1)d}{r}+\varepsilon}}
$$
holds true for all tuples $(u_0,q,p_1,\ldots,p_r)\in\mathcal{A}'$.  Since  $(r-1)d/r>1$,  we conclude that 
$$
\prod_{i=1}^r|L_{i,\infty}(p_1,p_2,\ldots,p_{i_0-1},q, p_{i_0+1},\ldots,p_{r})|<\frac{1}{H^\varepsilon(u_0)}\frac{|q|}{|q|^{\frac{(r-1)d}{r}+\varepsilon}}\leq \frac{1}{H^\varepsilon(u_0)}\frac{1}{|q|^{\varepsilon}}
$$
holds true for  all tuples $(u_0, q$, $p_1,\ldots, p_r) \in\mathcal{A}'$.   Therefore, by Theorem \ref{schli},  we get a non-trivial relation  
\begin{equation}\label{eq2.6}
 b_{i_0} q +b_1 p_1+\cdots+b_{i_0-1}p_{i_0-1}+b_{i_0+1}p_{i_0+1}+\cdots+b_r p_r=0\quad\mbox{where}\quad b_i\in\Z
\end{equation}
holds true for all  tuples $(u_0, q, p_1,\ldots, p_r) \in \mathcal{A}''$ for some infinite subset $\mathcal{A}''$ of $\mathcal{A}'$. 
Since $\alpha_1, \ldots, \alpha_{i_0-1}$, $\alpha_{i_0+1}$, $\ldots$, $\alpha_r$ are $\mathbb{Q}$-linearly independent, we conclude that $b_{i_0} \ne 0$.  
Now, by \eqref{eq2.5} and \eqref{eq2.6}, we can arrive at relation 
 $$
(b_{i_0} a_1-a_0 b_1) p_1+(b_{i_0} a_2-a_0 b_2)p_2+\cdots+(b_{i_0} a_{i_0-1}-a_0 b_{i_0-1})p_{i_0-1}+b_{i_0} a_{i_0} p_{i_0}+\cdots+(b_{i_0} a_r-a_0 b_r)p_r=0 
$$
which holds true for all tuples $(p_1,\ldots,p_r)$ where $(u_0,q,p_1,\ldots,p_r)\in\mathcal{A}''$. Since $a_{i_0}$ and $b_{i_0}$ are non-zero, we conclude that the above relation  is non-trivial. This proves the claim.

Now,  dividing by $q$ and letting the tuples  $(p_1, \ldots, p_r)$ vary over all  tuples in $\mathcal{A}''$, we get 
  $\alpha_1 u_0,\ldots,\alpha_r u_0$ are $\Q$-linearly dependent, which is a contradiction as $\alpha_1,\alpha_2,\ldots,\alpha_r$ are $\Q$-linearly independent.   Hence, $H(u) \to \infty$, as desired. 
 \end{proof}

The following lemmas are key to prove  Theorems \ref{maintheorem} and \ref{maintheorem1}.

\begin{lemma}\label{lem3.1}
Let $K$ be a number field of degree $n$ which is  Galois  over $\mathbb{Q}$   and $k\subset K$  be a subfield of degree $d$  over $\mathbb{Q}$. Let $\alpha_1,\ldots,\alpha_r$ be  $\mathbb{Q}$-linearly independent elements of $K$ for some integer $r\geq 1$.  Let $S$  be a finite set of places on $K$ which contains all the archimedean places and  let $\varepsilon>0$   be  a given real number.   Let $\mathcal{B}$ be a subset of  $(\mathcal{O}_S^\times\cap k)\times\mathbb{Z}^{r+1}$ as defined in  \eqref{eq2.3} such that for each tuple $(u, q, p_1, \ldots, p_r) \in \mathcal{B}$, there exists an integer $i \in \{1, \ldots, r\}$ satisfying $|q\alpha_i u|>1$ and $q\alpha_iu$ is not a  pseudo Pisot number.   If $\mathcal{B}$ is infinite, then there exist a proper subfield $k'\subset k$, a non-zero element $u'\in k$  and an infinite subset $\mathcal{B}'\subset \mathcal{B}$  such that $u/u'\in k'$ for all $(u, q, p_1, p_2,\ldots, p_r)\in\mathcal{B}'$. 
\end{lemma}


\begin{proof}
First note that $d\geq 2$ as $\Q$ doesn't admit any proper subfield in it.  Let $\mathcal{G} = \mbox{Gal}(K/\mathbb{Q})$ be the Galois group of $K$ over $\mathbb{Q}$. Since $K$ over $k$ is Galois, we let  $\mathcal{H}:=\mbox{Gal}(K/k)\subset \mathcal{G}$  be the subgroup fixing $k$.  Hence, $|\mathcal{G}/\mathcal{H}| = [k:\mathbb Q] = d$. Therefore, among the $n$ embeddings of $K$, there are   $d$ embeddings, say, $Id=\sigma_1,\ldots,\sigma_d$  which are the complete set of  representatives for the left cosets of $\mathcal{H}$ in $\mathcal{G}$ and more precisely, we have 
$$
\mathcal{G}/\mathcal{H}:=\{\mathcal{H}, \sigma_2 \mathcal{H},\ldots,\sigma_d \mathcal{H}\}.
$$
Each $\rho\in \mathcal{G}$ defines an archimedean valuation on $K$ by the formula
\begin{equation}\label{newarchimedean}
|\alpha|_\rho := |\rho^{-1}(\alpha)|^{d(\rho)/[K:\mathbb{Q}]},
\end{equation}
where $|\cdot|$ denotes the usual absolute value in $\mathbb{C}$. Two elements $\rho_1 \ne \rho_2$ in $\mathcal{G}$ define the same valuation if and only if $\rho_1^{-1}\circ \rho_2$ is the complex conjugation.  Then for a fixed $i$ with $1\leq i\leq r$,  by \eqref{newarchimedean}, for each $\rho\in\mbox{Gal}(K/\mathbb{Q})$ and for each tuple $(u, q, p_1,\ldots, p_r) \in \mathcal{B}$,  we have, 
\begin{equation}\label{eq2.7}
|\alpha_i qu-p_i|^{d(\rho)/[K:\mathbb{Q}]}=|\rho(\alpha_i)\rho(qu)-\rho(p_i)|_\rho=|\rho(\alpha_i)q\rho(u)-p_i|_\rho.
\end{equation}
For each $\mathit{v}\in M_\infty$, let $\rho_\mathit{v}$ be an automorphism defining the valuation $\mathit{v}$, according to \eqref{newarchimedean}: $|\alpha|_\mathit{v}:= |\alpha|_{\rho_v}$; Then the set $\{\rho_v : v\in M_\infty\}$ represents the left cosets of the subgroup generated by the complex conjugation in $\mathcal{G}$.  
For each $j = 1, 2, \ldots, d$,  let 
$$S_j = \left\{v\in M_\infty\ : \ \rho_v\vert_k = \sigma_j: k\rightarrow \mathbb{C}\right\}
$$
and hence $S_1\cup\ldots\cup S_d=M_\infty$.   Thus, we have $M_\infty = \{\rho_v : v\in M_\infty\}$ and by \eqref{eq2.7}, we get 
\begin{equation}\label{eq2.8}
\prod_{\mathit{v}\in M_\infty}|\rho_\mathit{v}(\alpha_i)\rho_{\mathit{v}}(qu)-p_i|_\mathit{v}=\prod_{j=1}^d\prod_{\mathit{v}\in S_j}|\rho_{\mathit{v}}(\alpha_i)\sigma_j(qu)-p_i|_v.
\end{equation} 
By \eqref{eq2.7}, we see that 
$$
\prod_{\mathit{v}\in M_\infty}|\rho_\mathit{v}(\alpha_i)\rho_{\mathit{v}}(qu)-p_i|_\mathit{v}=\prod_{\mathit{v}\in M_\infty}|\alpha_i q u-p_i|^{d(\rho_\mathit{v})/[K:\mathbb{Q}]}=|\alpha_i q u-p_i|^{{\sum_{\mathit{v}\in M_\infty}}d(\rho_\mathit{v})/[K:\mathbb{Q}]}.
$$
Then, from \eqref{eq2.8} and the well-known formula $\displaystyle\sum_{\mathit{v}\in M_\infty}d(\rho_\mathit{v})=[K:\mathbb{Q}]$, it follows that 
\begin{equation}\label{eq2.9}
\prod_{j=1}^d\prod_{\mathit{v}\in S_j}|\rho_{\mathit{v}}(\alpha_i)\sigma_j(qu)-p_i|_v=|\alpha_i q u-p_i|
\end{equation}
for all integers $i =  1, \ldots,  r.$   

Now, for each $\mathit{v}\in S$, we define $d+r$ linearly independent linear forms in $d+r$  variables as follows: For $j = 1, 2, \ldots, d$ and for  $\mathit{v}\in S_j$ and for each integer $i$ satisfying $1\leq i\leq r$, we let  
$$
L_{\mathit{v},i}(x_1,\ldots,x_r,\ldots,x_{r+d}) = x_i-\rho_{\mathit{v}}(\alpha_i) x_{j+r}, 
$$
and when the  integer $i$ in the range   $r+1\leq i\leq d+r$, we let 
$$
L_{v, i}(x_1,\ldots,x_r,\ldots,x_{r+d})=x_i.
$$
For each  $\mathit{v}\in S\backslash{M_\infty}$ and for  each integer $i$ satisfying $1\leq i \leq r+d$,  we let  
$$
L_{v,i}(x_1,\ldots,x_r,\ldots,x_{r+d})=x_i.
$$
Let $\mathbf{X}$ be the element in $K^{d+r}$  of the form
$$
\mathbf{X}=(p_1,p_2,\ldots,p_r,q\sigma_1(u),\ldots,q\sigma_d(u)) \in K^{d+r}.
$$
In order to apply Theorem \ref{schli}, we need to estimate the following quantity 
\begin{equation}\label{eq2.10}
\prod_{\mathit{v\in S}}\prod_{j=1}^{d+r}\frac{|L_{\mathit{v},j}(\mathbf{X})|_\mathit{v}}{\Vert\mathbf{X}\Vert_\mathit{v}}.
\end{equation}
Using the fact that $L_{\mathit{v},j}(\mathbf{X})=q\sigma_j(u)$, for $r+1\leq j\leq d+r$, we obtain
\begin{eqnarray*}
\prod_{\mathit{v\in S}}\prod_{j=r+1}^{d+r}|L_{\mathit{v},j}(\mathbf{X})|_\mathit{v}&=&\prod_{\mathit{v}\in S}\prod_{j=r+1}^{d+r}|q\sigma_j(u)|_{\mathit{v}} 
=\prod_{\mathit{v}\in S}\prod_{j=r+1}^{d+r}|q|_\mathit{v}\prod_{j=r+1}^{d+r}\prod_{\mathit{v}\in S}|\sigma_j(u)|_\mathit{v}.
\end{eqnarray*}
Since $\sigma_j(u)$  are $S$-units,  by the product formula,  we obtain
$$
\prod_{\mathit{v}\in S}|\sigma_j(u)|_\mathit{v}=\prod_{\mathit{v}\in M_K}|\sigma_j(u)|_\mathit{v}=1.  
$$
Consequently, the above equality implies 
$$
\prod_{\mathit{v\in S}}\prod_{j=r+1}^{d+r}|L_{\mathit{v},j}(\mathbf{X})|_\mathit{v}=\prod_{\mathit{v}\in S}\prod_{j=r+1}^{d+r}|q|_\mathit{v}\leq \prod_{v\in M_\infty}\prod_{j=r+1}^{d+r}|q|_\mathit{v}=\prod_{j=r+1}^{d+r}|q|^{\sum_{\mathit{v}\in M_\infty}d(\rho_\mathit{v})/[K:\mathbb{Q}]}.
$$
Then, from the formula $\sum_{\mathit{v}\in M_\infty}d(\rho_\mathit{v})=[K:\mathbb{Q}]$, we get 
\begin{equation}\label{eq2.11}
\prod_{\mathit{v\in S}}\prod_{j=r+1}^{d+r}|L_{\mathit{v},j}(\mathbf{X})|_\mathit{v}\leq \prod_{v\in M_\infty}\prod_{j=r+1}^{d+r}|q|_\mathit{v}=\prod_{j=r+1}^{d+r}|q|^{\sum_{\mathit{v}\in M_\infty}d(\rho_\mathit{v})/[K:\mathbb{Q}]}=|q|^d.
\end{equation}
Now we estimate the denominators of the product in \eqref{eq2.10} as follows; We have  
$$
\prod_{\mathit{v\in S}}\prod_{j=1}^{d+r}\Vert\mathbf{X}\Vert_\mathit{v}\geq \prod_{\mathit{v\in M_K}}\prod_{j=1}^{d+r}\Vert\mathbf{X}\Vert_\mathit{v}=\prod_{j=1}^{d+r}\left(\prod_{\mathit{v\in M_K}}\Vert\mathbf{X}\Vert_\mathit{v}\right) = \prod_{j=1}^{d+r}H(\mathbf{X}),
$$
since $\Vert\mathbf{X}\Vert_\mathit{v}\leq 1$ for all $\mathit{v}\not\in S$. Thus, we get, 
\begin{equation}\label{eq2.12}
\prod_{\mathit{v\in S}}\prod_{j=1}^{d+r}\Vert\mathbf{X}\Vert_\mathit{v}\geq H(\mathbf{X})^{d+r}.
\end{equation}
By \eqref{eq2.10}, \eqref{eq2.11}  and \eqref{eq2.12}, it follows that 
$$
\prod_{\mathit{v\in S}}\prod_{j=1}^{d+r}\frac{|L_{\mathit{v},j}(\mathbf{X})|_\mathit{v}}{\Vert\mathbf{X}\Vert_\mathit{v}}\leq \frac{1}{H^{d+r}(\mathbf{X})}|q|^d\prod_{i=1}^r|\alpha_i q u-p_i|.
$$
Thus, from \eqref{eq2.3}, we have
$$
\prod_{\mathit{v\in S}}\prod_{j=1}^{d+r}\frac{|L_{\mathit{v},j}(\mathbf{X})|_\mathit{v}}{||\mathbf{X}||_\mathit{v}}\leq \frac{1}{H^{d+r}(\mathbf{X})}|q|^d\frac{1}{H^{r\varepsilon}(u)}\frac{1}{|q|^{d+r\epsilon}}=\frac{1}{H^{d+r}(\mathbf{X})}\frac{1}{(|q|H(u))^{r\varepsilon}}.
$$
Notice that 
\begin{eqnarray*}
H(\mathbf{X})&=&\prod_{\mathit{v}\in M_K}\mbox{max}\{|p_1|_\mathit{v},\ldots,|p_r|_\mathit{v},|q\sigma_1(u)|_\mathit{v},\ldots,|q\sigma_d(u)|_\mathit{v}\}\\
&=&\prod_{\mathit{v}\in S}\mbox{max}\{|p_1|_\mathit{v},\ldots,|p_r|_\mathit{v},|q\sigma_1(u)|_\mathit{v},\ldots,|q\sigma_d(u)|_\mathit{v}\}\\
&\leq& \prod_{\mathit{v}\in S}\mbox{max}\{|qp_1\cdots p_r|_\mathit{v},\ldots,|q p_1\cdots p_r|_\mathit{v},|q p_1\cdots p_r\sigma_1(u)|_\mathit{v},\ldots,|qp_1\cdots p_r\sigma_d(u)|_\mathit{v}\}\\
&\leq&|qp_1\cdots p_r|\prod_{\mathit{v}\in S}\mbox{max}\{1,|\sigma_1(u)|_\mathit{v},\ldots,|\sigma_d(u)|_\mathit{v}\}\\ 
&\leq& |qp_1\cdots p_r|\left(\prod_{\mathit{v}\in S}\mbox{max}\{1,|\sigma_1(u)|_\mathit{v}\}\right)\cdots \left(\prod_{\mathit{v}\in S}\mbox{max}\{1,|\sigma_d(u)|_\mathit{v}\}\right)\\
&=&|qp_1\cdots p_r|H^d(u).
\end{eqnarray*} 
By using the the inequality 
$
||x|-|y||\leq |x-y|
$
and the fact, by Lemma \ref{newlemma}, that $H(u)\to \infty$ when $u$ varies over all the tuples $(u, q, p_1, \ldots, p_r)\in \mathcal{B}$, by \eqref{eq2.3}, we conclude that $|p_i|\leq |\alpha_i q u|+1.$  Since $|u|^{\frac{1}{d}}\leq H(u)$,   for all integers $i$ satisfying $1\leq i\leq r$, we get   
$$
|p_i|\leq |\alpha_i q u|+1\leq |q||\alpha_i|H^d(u)+1\leq |q|H^{2d}(u)
$$
holds true for all but finitely many tuples $(u, q, p_1, \ldots, p_r)\in \mathcal{B}$. By combining both these observations, we obtain $H(\mathbf{X})\leq |q|^{r+1} H(u)^{2rd +d}$, and hence  we get $H(\mathbf{X})^{1/(d(2r+1))} \leq |q|H(u)$. Therefore, 
$$
\prod_{\mathit{v\in S}}\prod_{j=1}^{d+r}\frac{|L_{\mathit{v},j}(\mathbf{X})|_\mathit{v}}{||\mathbf{X}||_\mathit{v}}\leq \frac{1}{H^{d+r}(\mathbf{X})}\frac{1}{(|q|H(u))^{r\varepsilon}}\leq \frac{1}{H(\mathbf{X})^{d+r+(r\varepsilon)/(2rd+1)}} = \frac{1}{H(\mathbf{X})^{d+r+ \varepsilon'}},
$$
for some $\varepsilon' >0$ holds true for infinitely many tuples $(u, q,p_1,\ldots,p_r)\in\mathcal{B}$.  Then, by Theorem \ref{schli}, there exists a proper subspace of $K^{d+r}$ that contain infinitely many $\mathbf{X} \in \mathcal{B}$.  
That is,  we have a non-trivial  relation  
\begin{equation}\label{eq2.13}
a_1 p_1+a_2 p_2+\cdots+a_r p_r+b_1q\sigma_1(u)+\cdots+b_d q \sigma_d(u)=0,\quad a_i, b_j\in K,
\end{equation}
holds true for all the tuples $(u, q, p_1,\ldots,p_r)\in\mathcal{B}_1$  for some infinite subset $\mathcal{B}_1$ of $\mathcal{B}$.   

\bigskip

By the hypothesis, we know that   $\alpha_i q u$ is not a  pseudo-Pisot number for some integer $i$.  Without loss of generality,  we can assume that for each $(u,q,p_1, \ldots, p_r) \in \mathcal{B}_1$, we have   $|\alpha_r qu|>1$ and $\alpha_rqu$  is not a  pseudo-Pisot number. Under the same hypothesis as in Lemma 3 in \cite{corv}, Corvaja and Zannier established a relation of the form $a_1 p+ b_1 q \sigma_1(u)+\cdots+b_d q \sigma_d(u)=0$. Therefore,  in view their  work, it is enough to show the existence of a non-trivial linear relation as in \eqref{eq2.13}  with $a_1=a_2=\cdots=a_{r-1}=0$.

\bigskip

\noindent{\bf Claim 1.}  At least one of the $b_j$'s is non-zero in the relation \eqref{eq2.13}.

\bigskip

If possible, suppose $b_i=0$  for all integers $i = 1, 2, \ldots, r$.  Then from \eqref{eq2.13}, we get 
\begin{equation}\label{eq2.14}
a_1 p_1+a_2 p_2+\cdots+a_r p_r=0\quad\mbox{with~~} a_i\in K.
\end{equation}
If  all the $a_i$'s are rational numbers, then, dividing  by $qu$, we obtain 
\begin{equation}\label{new1000}
a_1\frac{p_1}{qu}+\cdots+a_r\frac{p_r}{qu}=0
\end{equation}
holds true for all tuples $(u,q, p_1,\ldots,p_r) \in \mathcal{B}_1$.   For each such tuple,  the inequality 
$$
0<|\alpha_i qu-p_i|<\frac{1}{H^\varepsilon(u)|q|^{\frac{d}{r}+\epsilon}}  \iff 0< \left|\alpha_i - \frac{p_i}{qu}\right| < \frac{1}{H^\varepsilon(u)|u||q|^{1+\frac{d}{r}+\epsilon}} 
$$
holds true for every $i = 1, \ldots, r$.     As the tuples $(u, q, p_1, \ldots, p_r)$ vary over all the elements in $\mathcal{B}_1$,   by Lemma \ref{newlemma},  we have $H(u)\to\infty$. Therefore,  we conclude,  by \eqref{new1000},   that 
$$
a_1 \alpha_1 +\cdots+a_r \alpha_r =0.
$$
Since not all $a_i$'s are $0$, this  is a contradiction to the fact that $\alpha_i$'s are  $\mathbb{Q}$-linearly independent.   Hence we conclude that at  least one of $a_i$'s is algebraic irrational. Also, the sequence $\displaystyle\left(\frac{p_1}{qu}, \ldots, \frac{p_r}{qu}\right)$ tends to $(\alpha_1, \ldots \alpha_r)$ and since none of the $\alpha_i$'s are zero,  there exists an infinite subset $\mathcal{B}_2$ of $\mathcal{B}_1$ such that for any tuple $(u, q, p_1, \ldots, p_r) \in \mathcal{B}_2$ satisfying  $p_i \ne 0$ for all integers $i = 1, 2,\ldots, r$. 

\bigskip

Let  $\{a_1,a_2,\ldots,a_m\}$ be the maximal $\mathbb{Q}$-linearly independent subset of the set $\{a_1,\ldots, a_r\}$, if necessary, by renaming the indices.  Then we can write
$$
a_{m+i}=c_{i1}a_1+\cdots+c_{im}a_m, \mbox{~~where~~} c_{im}\in\mathbb{Q}, \quad\mbox{for all~~} 1\leq i\leq r-m.
$$
Thus by substituting the values of $a_{m+i}$ in \eqref{eq2.14}, we get 
$$
a_1 p_1+\cdots+a_m p_m+(c_{11}a_1+\cdots+c_{1m}a_m)p_{m+1}\cdots+(c_{(r-m)1}a_1+\cdots+c_{(r-m)m}a_m) p_r=0.
$$
We rewrite  this equality  in the following form
\begin{equation}\label{eq2.15}
a_1 (p_1+c_{11}p_{m+1}+\cdots+c_{(r-m)1}p_r)+\cdots+a_m(p_m+c_{1m}p_{m+1}+\cdots+c_{(r-m)m }p_r)=0.
\end{equation}
Since  $a_1,\ldots, a_m$ are $\mathbb{Q}$-linearly independent,  by  \eqref{eq2.15}, we obtain 
\begin{equation}\label{eq2.16}
p_1+c_{11}p_{m+1}+\cdots+c_{(r-m)1}p_r=0=\cdots=p_m+c_{1m}p_{m+1}+\cdots+c_{(r-m)m }p_r.  
\end{equation}
From \eqref{eq2.16}, we get a relation of the form \eqref{eq2.14} with rational coefficients, which is again not possible as observed earlier. Thus  this proves  Claim 1. 
%
%

\bigskip

\noindent{\bf Claim 2.}  There exists a non-trivial relation as in \eqref{eq2.13} with $a_i =0$ for all $i=1,2, \ldots, r-1$. 

\bigskip

We first prove that there exists a relation as in \eqref{eq2.13} with $a_1=0$. If possible,  we assume that $a_1\neq 0$. Then by rewriting the  relation \eqref{eq2.13} we obtain
\begin{equation}\label{eq2.17}
p_1=-\frac{a_2}{a_1}p_2-\cdots-\frac{a_r}{a_1}p_r-\frac{b_1}{a_1}q\sigma_1(u)-\cdots-\frac{b_d}{a_1}q\sigma_d(u).
\end{equation}

\noindent{\bf Case 1.} $\displaystyle\sigma_j\left(\frac{b_1}{a_1}\right) \ne \frac{b_j}{a_1}$ for some integer $j$ satisfying $2\leq j\leq d$. 

\smallskip

 By applying the automorphism $\sigma_j$  on both sides of \eqref{eq2.17}, we get
$$
p_1=-\sigma_j\left(\frac{a_2}{a_1}\right)p_2-\cdots-\sigma_j\left(\frac{a_r}{a_1}\right)p_r-\sigma_j\left(\frac{b_1}{a_1}\right)q\sigma_j\circ\sigma_1(u)-\cdots-\sigma_j\left(\frac{b_d}{a_1}\right)q\sigma_j\circ\sigma_d(u).
$$
By subtracting this relation with \eqref{eq2.17}, we get  a relation involving the terms only with  $p_2$,$\ldots$, $p_r$, $\sigma_1(u)$, $\ldots$, $\sigma_d(u)$.   Such a relation is non-trivial,  as the coefficient of $\sigma_j(u)$  is $\sigma_j(b_1/a_1)-\frac{b_j}{a_1}\neq 0$. 

\bigskip

\noindent{\bf Case 2.}  $\displaystyle\frac{b_j}{a_1}= \sigma_j\left(\frac{b_1}{a_1}\right)$  for all integers $j = 2, 3, \ldots, d$.  

\smallskip

Note that $b_1 \ne 0$. If not, then $0 =\sigma_j(b_1/a_1) = b_j/a_1$ for every integer $j$ and  hence  $b_i=0$ for all integers $i$, which contradicts Claim 1.  By putting $\lambda = -b_1/a_1$,   we rewrite \eqref{eq2.17} as 
\begin{equation}\label{eq2.18}
p_1=-\frac{a_2}{a_1}p_2-\cdots-\frac{a_r}{a_1}p_r+q(\sigma_1(\lambda)\sigma_1(u)+\cdots+\sigma_d(\lambda)\sigma_d(u)).
\end{equation}
 Clearly $a_i, b_j \in K$, but  it is not necessary that $\lambda$  belongs to $k$. If $\lambda\not\in k$,  then there exists an automorphism $\tau\in \mathcal{H}$ with $\tau(\lambda)\neq \lambda$.  By applying the automorphism $\tau$ on both sides of \eqref{eq2.18} and subtract with \eqref{eq2.18} to eliminate $p_1$, we obtain  
$$
p_2(\tau(a_2/a_1)-a_2/a_1)+\cdots+p_r(\tau(a_r/a_1)-a_r/a_1)+(\lambda-\tau(\lambda))\sigma_1(u)+\sum_{i=2}^d(\sigma_i(\lambda)\sigma_i(u)-\tau \circ\sigma_i(u))=0.
$$
Note that $\tau \circ\sigma_j$  coincides on $k$ with $\sigma_i$ for some integer $i$ and  since $\tau\in \mathcal{H}$  and $\sigma_2,\ldots,\sigma_d \not\in \mathcal{H}$, none of the $\tau \circ \sigma_j$ with $j\geq 2$ belongs in $\mathcal{H}$.  Hence the above relation can be written as a linear combination of $p_2,\ldots,p_r$ and  $\sigma_i(u)$ with the property that the coefficient of $\sigma_1(u)$  is $\lambda-\tau(\lambda)\ne 0$.   Therefore, we obtain a non-trivial linear relation among the $p_2,\ldots,p_r$ and $\sigma_i(u)$.

\smallskip

If  $\lambda\in k$,  then by adding $-\alpha_1 qu$ on both sides of  the equality   \eqref{eq2.18},  we get   
$$
|p_1-\alpha_1 qu|=\left|-\frac{a_2}{a_1}p_2-\cdots-\frac{a_r}{a_1}p_r+(\lambda-\alpha_1)q\sigma_1(u)+q\sigma_2(\lambda)\sigma_2(u)+\cdots+q\sigma_d(\lambda)\sigma_d(u)\right|.
$$
Then from \eqref{eq2.3}, we get
\begin{equation}\label{eq2.19}
\left|-\frac{a_2}{a_1}p_2-\cdots-\frac{a_r}{a_1}p_r+(\lambda-\alpha_1)q\sigma_1(u)+q\sigma_2(\lambda)\sigma_2(u)+\cdots+q\sigma_d(\lambda)\sigma_d(u)\right|<\frac{1}{H^\varepsilon(u)q^{\frac{d}{r}+\varepsilon}}.
\end{equation}
Then just like the inequality in \eqref{eq2.9}, we have the following important observation:
\begin{align*}
\prod_{j=1}^d\prod_{\mathit{v}\in S_j}&\left|-\rho_\mathit{v}\left(\frac{a_2}{a_1}\right)p_2-\cdots-\rho_\mathit{v}\left(\frac{a_r}{a_1}\right)p_r+(\rho_\mathit{v}(\lambda)-\rho_\mathit{v}(\alpha_1))q(\rho_\mathit{v}\circ\sigma_1)(u)+\cdots+q(\rho_\mathit{v}\circ\sigma_d)(\lambda)(\rho_\mathit{v}\circ\sigma_d)(u)\right|_{\mathit{v}}\\
&=\left|-\frac{a_2}{a_1}p_2-\cdots-\frac{a_r}{a_1}p_r+(\lambda-\alpha_1)q\sigma_1(u)+q\sigma_2(\lambda)\sigma_2(u)+\cdots+q\sigma_d(\lambda)\sigma_d(u)\right|.
\end{align*}
For each $\mathit{v}\in M_\infty$ and $j=1,\ldots,d$, we define $\mathit{v}(j)$ such that $\rho_\mathit{v}\circ \sigma_j=\sigma_{\mathit{v(j)}}$ on the field $k$, where $\{\mathit{v}(1),\ldots,\mathit{v}(d)\}$ is a permutation of $\{1,\ldots,d\}$.  Hence the above relation can be written as a linear combination of $p_2,\ldots,p_r$ and $\sigma_{\mathit{v}(1)},\ldots,\sigma_{\mathit{v}(d)}$. Therefore there exist   algebraic numbers  $c_{\mathit{v}(1)},\ldots,c_{\mathit{v}(d)}\in K$, not all zero, such that 
\begin{align*}
\prod_{j=1}^d\prod_{\mathit{v}\in S_j}&\left|-\rho_\mathit{v}\left(\frac{a_2}{a_1}\right)p_2-\cdots-\rho_\mathit{v}\left(\frac{a_r}{a_1}\right)p_r+(\rho_\mathit{v}(\lambda)-\rho_\mathit{v}(\alpha_1))q(\rho_\mathit{v}\circ\sigma_1)(u)+\cdots+q(\rho_\mathit{v}\circ\sigma_d)(\lambda)(\rho_\mathit{v}\circ\sigma_d)(u)\right|_{\mathit{v}}\\
&=\prod_{j=1}^d\prod_{\mathit{v}\in S_j}\left|-\rho_\mathit{v}\left(\frac{a_2}{a_1}\right)p_2-\cdots-\rho_\mathit{v}\left(\frac{a_r}{a_1}\right)p_r+c_{\mathit{v}(1)}q\sigma_{\mathit{v}(1)}(u)+\cdots+c_{\mathit{v}(d)}q\sigma_{\mathit{v}(d)}(u)\right|_{\mathit{v}}.
\end{align*}
Hence, by \eqref{eq2.19}, we have
\begin{equation}\label{eq2.20}
\prod_{j=1}^d\prod_{\mathit{v}\in S_j}\left|-\rho_\mathit{v}\left(\frac{a_2}{a_1}\right)p_2-\cdots-\rho_\mathit{v}\left(\frac{a_r}{a_1}\right)p_r+c_{\mathit{v}(1)}q\sigma_{\mathit{v}(1)}(u)+\cdots+c_{\mathit{v}(d)}q\sigma_{\mathit{v}(d)}(u)\right|_{\mathit{v}}<\frac{1}{H^\epsilon(u)q^{\frac{d}{r}+\epsilon}}.
\end{equation}
Since the inequality \eqref{eq2.20} holds  true for all the tuples $(u, q, p_1, \ldots, p_r) \in \mathcal{B}_1$,  we  can apply  Theorem \ref{schli} suitably. 
\vspace{.2cm}

For each $\mathit{v}\in M_\infty,$ since $c_{\mathit{v}(1)},\ldots,c_{\mathit{v}(d)}\in K$, not all zero, we let $k_v \in \{v(1), \ldots, v(d)\}$ such that $c_{k_v}\ne 0$.  
Now, for each $\mathit{v}\in S$, we define $r+d-1$ linearly independent linear forms in $r+d-1$  variables as follows:  for each $j = 1, 2, \ldots, d$ and for each  $\mathit{v}\in S_j$,  we define  
$$
L_{\mathit{v},i}(x_1, \ldots, x_r, \ldots, x_{r+d-1}) = x_{i} - \rho_{\mathit{v}}(\alpha_i) x_{r+j-1}
$$
for each $i = 1, 2, \ldots, r-1$; 
$$
L_{\mathit{v}, r-1+k_v}(x_1,\ldots,x_r,\ldots,x_{r+d-1})=-\rho_\mathit{v}\left(\frac{a_2}{a_1}\right)x_1-\cdots-\rho_\mathit{v}\left(\frac{a_r}{a_1}\right)x_{r-1}+c_{\mathit{v}(1)}x_{r-1+\mathit{v}(1)}+\cdots+c_{\mathit{v}(d)}x_{r-1+\mathit{v}(d)};
$$
 for each $r\leq m \neq r-1+k_v\leq r+d-1$, we define 
\begin{eqnarray*} 
L_{v, m}(x_1,\ldots,x_r,\ldots,x_{r+d-1})&=&x_m;
\end{eqnarray*}
and for each $\mathit{v}\in S\backslash M_\infty$ and  for each integer $i$ in the range $1\leq i \leq r+d-1$,  we consider 
$$
L_{v,i}(x_1,\ldots,x_r,\ldots,x_{r+d-1})=x_i.
$$
Since  $c_{k_v} \ne 0$, it follows that for each $\mathit{v}\in S$, the linear forms $L_{\mathit{v},1},\ldots,L_{\mathit{v}, r+d-1}$  are linearly  independent. 
\smallskip

Write the special points in $K^{r+d-1}$ as 
$$
\mathbf{X}=(p_2,\ldots,p_r,q\sigma_1(u),\ldots,q\sigma_d(u))\in  K^{r+d-1}.
$$
Then, by Theorem \ref{schli}, there exists a proper subspace of ${K}^{r+d-1}$ which contain infinitely many points $\mathbf{X}=(p_2,\ldots,p_r,q\sigma_1(u),\ldots,q\sigma_d(u))$.   Hence, we get a non-trivial relation  
\begin{equation}\label{eq2.21}
a'_2 p_2+\cdots+a'_r p_r+b'_1q\sigma_1(u)+\cdots+b'_d q \sigma_d(u)=0,\quad a'_i, b'_i\in K
\end{equation} 
holds true for infinitely many tuples $(p_2, \ldots, p_r, q\sigma_1(u), \ldots, q\sigma_d(u))$.  By Claim 1, we can always assume that not all $b_i$'s are zero. Thus,  we obtain a non-trivial relation in $r+d-1$ tuples. 

\bigskip

By continuing this process,  inductively, we can get a non-trivial relation with $a_i = 0$ for all integers $i = 1, 2, \ldots, r-1$. That is,  
$$
a''_r p_r+b''_1q\sigma_1(u)+\cdots+b''_d q \sigma_d(u)=0,\quad a''_r,~~ b''_i\in K
$$
holds true for infinitely many  tuples $(p_r, q\sigma_1(u), \ldots, q\sigma_d(u))$ where $b''_i$'s are not all zero. This proves Claim 2.  We then conclude exactly as in \cite[ Lemma 3]{corv} to complete the proof of  this Lemma.
\end{proof}

\begin{lemma}\label{lem3.2}
Let $K$ be a number field of degree $n$ which is  Galois  over $\mathbb{Q}$  and $S$  be a finite set of places on $K$ which contains all the archimedean places.  Let  $k\subset K$  be a subfield of degree $d$  over $\mathbb{Q}$ for some integer $d\geq 1$ and $\alpha_1, \ldots, \alpha_d$ be any  elements of $K$. For a given real number $\epsilon>0$,  let
\begin{equation}\label{eq2.22}
\mathcal{B} = \left\{(u,q, p) \in (\mathcal{O}_S^\times\cap k)\times\mathbb{Z}^2  :  0<|\alpha_1 q u_1+\cdots+\alpha_d q u_d-p|<\frac{1}{H^\varepsilon(u_1)|q|^{d+\varepsilon}}
\right\}
\end{equation}
where $u=u_1$ and  $u_2, \ldots, u_d$ are the other conjugates of $u$   and  for each triple $(u,q, p)\in \mathcal{B}$, there exists an integer $i\in\{1,\ldots,d\}$ such that  $|q\alpha_iu_i|>1$ and $q\alpha_iu_i$ is not a pseudo-Pisot number.  If $\mathcal{B}$ is infinite, then  there exist a proper subfield $k'\subset k$, a non-zero element $u'\in k$  and an infinite subset $\mathcal{B}'\subset \mathcal{B}$  such that for all triples $(u,q,p)\in\mathcal{B}'$, we have  $u/u'\in k'.$
\end{lemma}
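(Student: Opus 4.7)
\emph{The plan} is to follow the template of Lemma~\ref{lem3.1}, which itself adapts the Corvaja--Zannier argument. The structure is in fact simpler here because we have a single integer $p$ approximating one linear combination of conjugates rather than $r$ simultaneous approximations, so after applying Schmidt's Subspace Theorem we will obtain a single linear relation and will not need an inductive ``Claim~2''-type reduction. Concretely, I would: (i) first show $H(u)\to\infty$ along $\mathcal B$ (assuming $d\geq 2$, since $d=1$ would make the conclusion vacuous as $\mathbb Q$ has no proper subfields); (ii) apply Theorem~\ref{schli} to the vector $\mathbf X=(p,q\sigma_1(u),\ldots,q\sigma_d(u))\in K^{d+1}$; (iii) extract a non-trivial $K$-linear relation of the shape $c_0p+\sum_{j=1}^d c_j\,q\sigma_j(u)=0$; and (iv) use the $c$-pseudo-Pisot hypothesis together with Lemma~\ref{lem2.2} to produce the proper subfield $k'\subset k$ and the element $u'$. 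Step (i) is handled by Roth: if $H(u)$ were bounded one could fix $u=u_0$, and then $\beta:=\sum_{i}\alpha_i\sigma_i(u_0)\in K$ would be approximated by $p/q$ to precision $q^{-d-\varepsilon}$, contradicting Roth's theorem since $d\geq 2$ (the rational case of $\beta$ is ruled out by the strict positivity in \eqref{eq2.22}).

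For step (ii), at each archimedean $v\in S_j$ I would take the ``approximation form''
\[
L_{v,0}(X_0,X_1,\ldots,X_d)=\sum_{i=1}^d \rho_v(\alpha_i)\,X_{v(i)}-X_0,
\]
where $v(i)\in\{1,\ldots,d\}$ is the permutation index defined by $\sigma_{v(i)}|_k=(\rho_v\sigma_i)|_k$, together with the coordinate forms $L_{v,j}=X_j$ for $j=1,\ldots,d$; at non-archimedean $v\in S$ I take all $d+1$ forms to be coordinates. The crucial identity $\rho_v^{-1}(\sigma_{v(i)}(u))=\sigma_i(u)=u_i$ (valid because $u\in k$) makes $\prod_{v\in M_\infty}|L_{v,0}(\mathbf X)|_v$ collapse to $|\sum_i\alpha_iqu_i-p|$, exactly as in \eqref{eq2.9}. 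The remaining coordinate contributions telescope, via the $S$-unit property of $u$ and the product formula, to a factor bounded by $|q|^d$; combined with the standard bounds $H(\mathbf X)\leq|qp|H(u)^d$ and $|p|\ll|q|H(u)^d$, the defining inequality \eqref{eq2.22} then converts into a Schmidt-type estimate of the shape $\leq H(\mathbf X)^{-(d+1)-\varepsilon'}$ for some $\varepsilon'>0$, so Theorem~\ref{schli} yields the non-trivial relation on an infinite subset $\mathcal B_1\subset\mathcal B$.

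\emph{The hard step is (iv)}, where the pseudo-Pisot hypothesis plays its essential role. First one argues that not all $c_j$ with $j\geq 1$ vanish, as otherwise $c_0p=0$ leaves only the trivial relation. Assuming then $c_0\neq 0$, solve for $p$, set $\lambda:=-c_1/c_0$, and apply an automorphism $\tau\in\mathcal H=\mathrm{Gal}(K/k)$: when $\lambda\notin k$, choosing $\tau$ with $\tau(\lambda)\neq\lambda$ and subtracting eliminates $p$ and produces a non-trivial $K$-linear relation purely among the $\sigma_j(u)$; when $\lambda\in k$, adding $-q\sum_i\alpha_i\sigma_i(u)$ to both sides and invoking the hypothesis that some $q\alpha_iu_i$ fails to be $c$-pseudo-Pisot forces such a relation exactly as in \cite[Lemma~3]{corv}. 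Either way, one arrives at a non-trivial $S$-unit equation $\sum_j a_j\sigma_j(u)=0$ valid for infinitely many $u\in\mathcal B_1$. Lemma~\ref{lem2.2} then yields indices $i\neq j$ and non-zero $a,b\in K^\times$ with $a\sigma_i(v)+b\sigma_j(v)=0$ on an infinite sub-family; fixing a representative $u'$ and taking $k'\subset k$ to be the subfield fixed by the non-trivial element $\sigma_i^{-1}\sigma_j$ (proper because $i\neq j$), one concludes $u/u'\in k'$ as required.
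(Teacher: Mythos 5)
Your proposal follows the paper's proof in structure and in all its key moves: the same choice of $\mathbf X=(p,q\sigma_1(u),\dots,q\sigma_d(u))\in K^{d+1}$, the same way of choosing one ``approximation form'' per archimedean place and coordinate forms elsewhere, the same height/$|q|^d$ bookkeeping leading to a $H(\mathbf X)^{-(d+1)-\varepsilon'}$ estimate, and the same endgame (eliminate $p$ from the resulting subspace relation using either a $\tau\in\mathcal H$ or the failure of the $c$-pseudo-Pisot property, then invoke the $S$-unit machinery of Lemmas~\ref{lem2.1}--\ref{lem2.2} as in Corvaja--Zannier's Lemma 3 to extract the proper subfield).

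Two remarks worth making. First, your formulation of $L_{v,0}$ with the permutation indices $v(i)$ defined by $\sigma_{v(i)}|_k=(\rho_v\sigma_i)|_k$ is actually the precise form needed to make the product over archimedean places collapse to $|\sum_i\alpha_i qu_i-p|$ as in \eqref{eq2.24}; the paper states the forms without this permutation, which is a small inconsistency with \eqref{eq2.24} (the permutation is introduced explicitly in the Lemma~\ref{lem3.1} proof via $v(j)$, so the intent is clear). Your step (i), establishing $H(u)\to\infty$ via Roth, is a Claim~0 that the paper proves for Lemma~\ref{lem3.1} but silently reuses for Lemma~\ref{lem3.2}; including it makes the $|p|\ll|q|H(u)^d$ bound rigorous. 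Second, be slightly careful in step (iv): before setting $\lambda=-c_1/c_0$ you should first dispose of the case where $\sigma_j(c_1/c_0)\neq c_j/c_0$ for some $j$ (apply $\sigma_j$ and subtract to eliminate $p$ directly), since the dichotomy ``$\lambda\in k$ or not'' alone does not cover the situation where $\lambda\in k$ but the coefficient vector $(c_j/c_0)$ is not the conjugate vector of $\lambda$; and the step producing the $S$-unit equation from the pseudo-Pisot hypothesis requires an explicit appeal to Lemma~\ref{lem2.1} (the near-vanishing sum $\sum_j\beta_j\sigma_j(u)$ is only turned into an honest linear relation by that lemma), not Lemma~\ref{lem2.2} alone. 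Since you defer this part to \cite[Lemma 3]{corv}, where both points are handled, this is a presentational rather than a substantive gap.
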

\begin{proof} Note that $d\geq 2$ because $\mathbb{Q}$ doesn't admit any proper subfield in it.  Let $\mathcal{H}:=\mbox{Gal}(K/k)\subset\mbox{Gal}(K/\mathbb{Q})=\mathcal{G}$  be the subgroup of the Galois group $\mathcal{G}$ fixing $k$.  
Since $K$ is Galois over $\mathbb{Q}$, we have $K$ is Galois over $k$ and  $|\mathcal{G}/\mathcal{H}| =d$.  Therefore,  among the $n$ embeddings on $K,$ there are  $d$ embeddings $\sigma_1,\ldots,\sigma_d$ (with $\sigma_1$ is the identity) which are  the complete set of representatives of the left cosets of $\mathcal{H}$ in $\mathcal{G}$ and more precisely, we have 
$$
\mathcal{G}/\mathcal{H}:=\{\mathcal{H}, \sigma_2 \mathcal{H},\ldots,\sigma_d \mathcal{H}\}.
$$
For each $\rho\in\mbox{Gal}(K/\mathbb{Q})$ and for any triple $(u, q, p)\in\mathcal{B}$,  with the  rule  in  \eqref{newarchimedean}, we have 
\begin{align}\label{eq2.23}
|\alpha_1 q u_1+\cdots+\alpha_d q u_d-p|^{d(\rho)/[K:\mathbb{Q}]}&=|\rho(\alpha_1)\rho(q u_1)+\cdots+\rho(\alpha_d)\rho(q u_d)-\rho(p)|_\rho\nonumber\\
&=|\rho(\alpha_1)q\rho(u_1)+\cdots+\rho(\alpha_d)q\rho(u_d)-p|_\rho.
\end{align}
For each $\mathit{v}\in M_\infty$, let $\rho_\mathit{v}$  be an automorphism defining the valuation $\mathit{v}$, according to   \eqref{newarchimedean}: $|x|_\mathit{v}:=|x|_{\rho_\mathit{v}}$. Then the set $\{\rho_\mathit{v} : \mathit{v}\in M_\infty\}$  represents the left cosets of the subgroup generated by the complex conjugation in $\mathcal{G}$.   For $j=1,2,\ldots,d$, let $S_j$ be the subset of $M_\infty$  formed by those valuation $\mathit{v}$ such that $\rho_\mathit{v}|_k=\sigma_j:k\rightarrow \mathbb{C}$.  Note that $S_1\cup\ldots\cup S_d=M_\infty$. Thus, we have $M_\infty = \{\rho_v : v\in M_\infty\}$ and  
for each triple $(u, q, p) \in \mathcal{B}$, we obtain
\begin{eqnarray*}
\prod_{\mathit{v}\in M_\infty}|\rho_\mathit{v}(\alpha_1)q\rho_\mathit{v}(u_1)+\cdots&+&\rho_\mathit{v}(\alpha_d)q\rho_\mathit{v}(u_d)-p|_\mathit{v}\\ &=&\prod_{j=1}^d\prod_{\mathit{v}\in S_j}|\rho_\mathit{v}(\alpha_1)q \sigma_j(u_1)+\cdots+\rho_\mathit{v}(\alpha_d)q \sigma_j(u_d)-p|_\mathit{v}.
\end{eqnarray*} 
By \eqref{eq2.23}, we see that 
\begin{eqnarray*}
\prod_{\mathit{v}\in M_\infty}|\rho_\mathit{v}(\alpha_1)q\rho_\mathit{v}(u_1)+\cdots+\rho_\mathit{v}(\alpha_d)q\rho_\mathit{v}(u_d)-p|_\mathit{v} &=&\prod_{\mathit{v}\in M_\infty}|\alpha_1 q u_1+\cdots+\alpha_d q u_d-p|^{d(\rho)/[K:\mathbb{Q}]}\\
&=&|\alpha_1 q u_1+\cdots+\alpha_d q u_d-p|^{\sum_{\mathit{v}\in M_\infty}d(\rho_\mathit{v})/[K:\mathbb{Q}]}.
\end{eqnarray*}
Then, by the formula $\displaystyle\sum_{\mathit{v}\in M_\infty}d(\rho_\mathit{v})=[K:\mathbb{Q}]$, it follows that 
\begin{equation}\label{eq2.24}
\prod_{j=1}^d\prod_{\mathit{v}\in S_j}|\rho_\mathit{v}(\alpha_1)q \sigma_j(u_1)+\cdots+\rho_\mathit{v}(\alpha_d)q \sigma_j(u_d)-p|_\mathit{v}=|\alpha_1 q u_1+\cdots +\alpha_d q u_d-p|.
\end{equation}
 Now, for each $\mathit{v}\in S$, we define $d+1$ linearly independent linear forms in $d+1$  variables as follows:  For each $j = 1, 2, \ldots, d$ and  for an archimedean place $\mathit{v}\in S_j$, we define  
\begin{eqnarray*}
L_{\mathit{v},0}(x_0,\ldots,x_d)&=& -x_0 +\rho_\mathit{v}(\alpha_1) x_{1}+\cdots+\rho_\mathit{v}(\alpha_d) x_{d}
\end{eqnarray*}
 and for any integer $i$ satisfying $0< i\leq d$, we define 
$$
L_{v, i}(x_0,\ldots,x_d)=x_i.
$$
Also, for any $v\in S\backslash M_\infty$ and for any integer $i$ satisfying $0\leq i\leq d$, we put
$$
L_{v, i}(x_0, \ldots, x_d) = x_i.
$$
Clearly,  these linear forms are $\mathbb{Q}$-linearly independent. Let the special points $\mathbf{X} \in K^{d+1}$ be of the form
$$
\mathbf{X}=(p,q\sigma_1(u),\ldots,q\sigma_d(u)) \in K^{d+1}.
$$
In order to apply Theorem \ref{schli}, we need to estimate the following quantity 
\begin{equation}\label{eq2.25}
\prod_{\mathit{v\in S}}\prod_{j=0}^{d}\frac{|L_{\mathit{v},j}(\mathbf{X})|_\mathit{v}}{\Vert\mathbf{X}\Vert_\mathit{v}}.
\end{equation}
Using the fact that $L_{\mathit{v},j}(\mathbf{X}) =q \sigma_j(u)$, for all $1\leq j\leq d$ and for all $v$, we obtain
\begin{eqnarray*}
\prod_{\mathit{v\in S}}\prod_{j=1}^{d}|L_{\mathit{v},j}(\mathbf{X})|_\mathit{v}&=&\prod_{\mathit{v}\in S}\prod_{j=1}^{d}|q \sigma_j(u)|_\mathit{v}
= \prod_{\mathit{v}\in S}\prod_{j=1}^{d}|q|_\mathit{v}\prod_{j=1}^{d}\prod_{\mathit{v}\in S}|\sigma_j(u)|_\mathit{v}.
\end{eqnarray*}
Since $\sigma_j(u)$  are $S$-units, then by the product formula we obtain
$$
\prod_{\mathit{v}\in S}|\sigma_j(u)|_\mathit{v}=\prod_{\mathit{v}\in M_K}|\sigma_j(u)|_\mathit{v}=1.  
$$
Consequently, from the above equality, we get  
$$
\prod_{\mathit{v\in S}}\prod_{j=1}^{d}|L_{\mathit{v},j}(\mathbf{X})|_\mathit{v}=\prod_{\mathit{v}\in S}\prod_{j=1}^{d}|q|_\mathit{v}\leq \prod_{v\in M_\infty}\prod_{j=1}^{d}|q|_\mathit{v}=\prod_{j=1}^{d}|q|^{\sum_{\mathit{v}\in M_\infty}d(\rho_\mathit{v})/[K:\mathbb{Q}]}.
$$
Then, from the formula $\sum_{\mathit{v}\in M_\infty}d(\rho_\mathit{v})=[K:\mathbb{Q}]$, we get 
\begin{equation}\label{eq2.26}
\prod_{\mathit{v\in S}}\prod_{j=1}^{d}|L_{\mathit{v},j}(\mathbf{X})|_\mathit{v}\leq \prod_{v\in M_\infty}\prod_{j=1}^{d}|q|_\mathit{v}=\prod_{j=1}^{d}|q|^{\sum_{\mathit{v}\in M_\infty}d(\rho_\mathit{v})/[K:\mathbb{Q}]}=|q|^d.
\end{equation}
Now we estimate the product of the denominators in \eqref{eq2.25} as follows; Consider 
$$
\prod_{\mathit{v\in S}}\prod_{j=0}^{d}\Vert\mathbf{X}\Vert_\mathit{v}\geq \prod_{\mathit{v\in M_K}}\prod_{j=0}^{d}\Vert\mathbf{X}\Vert_\mathit{v}=\prod_{j=0}^{d}\left(\prod_{\mathit{v\in M_K}}\Vert\mathbf{X}\Vert_\mathit{v}\right),
$$
since $||\mathbf{X}||_\mathit{v}\leq 1$ for all $\mathit{v}\not\in S$. Thus, by the definition of $H(\mathbf{X})$, we conclude that 
\begin{equation}\label{eq2.27}
\prod_{\mathit{v\in S}}\prod_{j=0}^{d}\Vert\mathbf{X}\Vert_\mathit{v}\geq \prod_{j=0}^{d} H(\mathbf{X}).
\end{equation}
By \eqref{eq2.24}, \eqref{eq2.26}  and \eqref{eq2.27}, we get 
$$
\prod_{\mathit{v\in S}}\prod_{j=0}^{d}\frac{|L_{\mathit{v},j}(\mathbf{X})|_\mathit{v}}{||\mathbf{X}||_\mathit{v}}\leq \frac{1}{H^{d+1}(\mathbf{X})}|q|^d|\alpha_1 q u_1+\cdots+\alpha_d q u_d-p|.
$$
Therefore, by \eqref{eq2.22}, we get 
$$
\prod_{\mathit{v\in S}}\prod_{j=0}^{d}\frac{|L_{\mathit{v},j}(\mathbf{X})|_\mathit{v}}{||\mathbf{X}||_\mathit{v}}\leq \frac{1}{H^{d+1}(\mathbf{X})}|q|^d\frac{1}{H^{\epsilon}(u)}\frac{1}{|q|^{d+\varepsilon}}=\frac{1}{H^{d+1}(\mathbf{X})}\frac{1}{(|q|H(u))^{\varepsilon}}.
$$
First note that 
$$
|p| \leq |\alpha_1 q u_1+\cdots +\alpha_dqu_d|+1 \leq |q| |\alpha| \  H(u)^d \ d
$$
where $|\alpha| = \max\{|\alpha_i| : \ i = 1,2,\ldots, d\}$ and every conjugate of $u$ has absolute value bounded  by its Weil height power $d$.  Hence, we get
$$
|p|  \leq C(\alpha, d) |q| H^d(u),
$$
where $C(\alpha, d)$ is a positive constant depends only on $\alpha$ and $d$.  Since $H(\mathbf{X})\leq |q||p|H(u)^d$, we get    
$$
H(\mathbf{X}) \leq C'|q|^2H(u)^{2d} \leq C' (|q|H(u))^{2d}\Longrightarrow |q|H(u) \geq C'' H(\mathbf{X})^{1/(2d)}.
$$
where $C'$ and $C''$ are positive constants depends only on $\alpha$ and $d$  and hence  the last  inequality becomes 
$$
\prod_{\mathit{v\in S}}\prod_{j=0}^{d}\frac{|L_{\mathit{v},j}(\mathbf{X})|_\mathit{v}}{||\mathbf{X}||_\mathit{v}}\leq \frac{1}{H(\mathbf{X})^{d+1+\varepsilon'}},
$$
for some $\epsilon' >0$ which holds for infinitely many points $\mathbf{X}$.  Therefore, by Theorem \ref{schli},  there exists a proper subspace of $K^{d+1}$ which contain infinitely many points $\mathbf{X}=(p, q\sigma_1(u),\ldots,q\sigma_d(u))$.   It means that, we obtain a non-trivial  relation  
\begin{equation}\label{eq2.28}
a_0 p+a_1q\sigma_1(u)+\cdots+a_d q \sigma_d(u)=0,\quad a_i\in K,
\end{equation}
satisfied by all the triples $(u,q, p)\in\mathcal{B}_1 \subset \mathcal{B}$ for some infinite subset $\mathcal{B}_1$  of $\mathcal{B}$. Also,  for each triple $(u, q, p) \in \mathcal{B}_1$, without loss of generality, we can assume that $|q\alpha_1 u| > 1$ and $q\alpha_1 u$ is not a  pseudo-Pisot number. 

\bigskip

Since not all $a_i$'s are $0$, clearly, we can conclude that  at least one among $a_1,\ldots,a_d$  is non-zero.   Now, we have the following claim. 

\bigskip

\noindent{\bf Claim 1.}  There exists  a non-trivial relation as in \eqref{eq2.28} with $a_0 = 0$.

\bigskip
 
Suppose that $a_0\neq 0$.  Then we rewrite the relation \eqref{eq2.28} as 
\begin{equation}\label{eq2.29} 
p=-\frac{a_1}{a_0}q\sigma_1(u)-\cdots-\frac{a_d}{a_0}q\sigma_d(u).
\end{equation}
By considering the case when   $\sigma_j(a_1/a_0)\neq a_j/a_0,$ for some index $j\in\{2,\ldots,d\}$, or the case when $a_j/a_0=\sigma_j(a_1/a_0)$  for all $j$, we can conclude that  all the coefficients $a_j/a_0$  are non-zero.  Former case can be handled as in Case 1 in Claim 2 of Lemma \ref{lem3.1}. We deal with the latter case.

Put $\lambda=-a_1/a_0$.  With these notations,  we can re-write \eqref{eq2.29} as follows:
$$
p=q(\sigma_1(\lambda)\sigma_1(u)+\cdots+\sigma_d(\lambda)\sigma_d(u)).
$$
In the proof of Claim 2 of  Lemma \ref{lem3.1}, we had the two possibilities, namely,  either $\lambda\in k$ or $\lambda\notin k$.    Here also, the proof for the case $\lambda\notin k$ is similar to that of the proof of  Claim 2 of Lemma \ref{lem3.1}.  Therefore we consider the  case $\lambda\in k$.   By adding $-\alpha_1 q \sigma_1(u)-\cdots-\alpha_d q \sigma_d(u)$ to both sides in the above equality,  we get
\begin{align*}
|p-(\alpha_1 q \sigma_1(u)+\cdots+\alpha_d q \sigma_d(u))|&=|p-(\alpha_1 q u_1+\cdots+\alpha_d q u_d)|\\&=|(\lambda-\alpha_1)q \sigma_1(u)+(\sigma_2(\lambda)-\alpha_2)q \sigma_2(u)+\cdots+(\sigma_d(\lambda)-\alpha_d)q \sigma_d(u)|.
\end{align*}
Therefore by \eqref{eq2.22}, we get 
$$
0<|(\lambda-\alpha_1)q \sigma_1(u)+(\sigma_2(\lambda)-\alpha_2)q \sigma_2(u)+\cdots+(\sigma_d(\lambda)-\alpha_d)q \sigma_d(u)|<\frac{1}{|q|^{d+\varepsilon}}\frac{1}{H^\varepsilon(u)}.
$$
Then  dividing by $q$ on both sides to get
\begin{equation}\label{eq2.30}
0<|(\lambda-\alpha_1)\sigma_1(u)+(\sigma_2(\lambda)-\alpha_2)\sigma_2(u)+\cdots+(\sigma_d(\lambda)-\alpha_d)\sigma_d(u)|<\frac{1}{|q|^{d+1+\varepsilon}}\frac{1}{H^\varepsilon(u)}\leq \frac{1}{|q|^{1+\varepsilon}}\frac{1}{H^\varepsilon(u)}.
\end{equation}  
By putting  $\sigma_i(\lambda)-\alpha_i =\beta_i$ for all integers $i$,  we note that not all $\beta_i$'s are zero.  Then by  re-writing  \eqref{eq2.30}, we get   
\begin{equation}\label{eq2.31}
|\beta_1\sigma_1(u)+\cdots+\beta_d\sigma_d(u)|<\frac{1}{|q|^{1+\varepsilon}}\frac{1}{H^\varepsilon(u)}
\end{equation}
holds true for all triples $(u, q, p)\in \mathcal{B}_1$.  In order to apply Lemma \ref{lem2.1},  we distinguish two cases, namely, $\beta_1 =0$ and $\beta_1\neq 0$ as follows. 

\bigskip
  
Suppose $\beta_1=0$.  In this case, $\sigma_1(\lambda) = \alpha_1$ and hence the algebraic number $q\alpha_1 u=q\lambda u$.  Since $\alpha_1 q u$  is not a pseudo-Pisot number, we get  $q\lambda u$ is not a pseudo-Pisot number.  Therefore
$$
\max\{|\sigma_2(q \lambda u)|,\ldots,|\sigma_d(q \lambda u)|\}  \geq 1.
$$    
This implies that 
\begin{equation}\label{eq2.32}
\max\{|\sigma_2(u)|,\ldots,|\sigma_d(u)|\}\geq \frac{1}{|q|}\max \{|\sigma_2(\lambda)|,\ldots,|\sigma_d(\lambda)|\}^{-1}.
\end{equation} 
Since not all  $\beta_i$'s are  zero, let  $\beta_{i_1},\ldots,\beta_{i_r}$  be non-zero elements among $\beta_2,\ldots,\beta_d$ and  by \eqref{eq2.31}, we get 
\begin{equation}\label{eq2.33}
|\beta_{i_1}\sigma_{i_1}(u)+\cdots+\beta_{i_r}\sigma_{i_r}(u)|<\frac{1}{|q|^{1+\epsilon}}\frac{1}{H^\varepsilon(u)}.
\end{equation}
holds true for all triples  $(u,q,p)\in\mathcal{B}_1.$  Thus by \eqref{eq2.32}  and \eqref{eq2.33},  for all triples $(u, q,p)\in\mathcal{B}_1$,    the inequality
\begin{equation*}
|\beta_{i_1}\sigma_{i_1}(u)+\cdots+\beta_{i_r}\sigma_{i_r}(u)|<\mbox{max}\{|\sigma_1(u)|, \ldots, |\sigma_d(u)|\} H^{-\varepsilon}(u)
\end{equation*}
holds true.  Therefore by  Lemma \ref{lem2.1},  with the distinguished place $\omega$ corresponding to the identity embedding,  $n=i_r$ and $\lambda_{i_j}=\beta_{i_j}$ for $1\leq j\leq r$, we get an infinite subset $\mathcal{B}_2 \subset \mathcal{B}_1$ such that for all triples $(u,q, p) \in \mathcal{B}_2$ 
there exists a non-trivial relation of the form
$$
s_1q\sigma_1(u)+\cdots+s_d q \sigma_d(u)=0
$$
holds true for some $s_1, \ldots, s_d\in K$.  Therefore, by Lemma \ref{lem2.2}, there exist an infinite subset $\mathcal{B}_3$ of  $\mathcal{B}_2$  and a non-trivial relation of the form $a\sigma_j(u)+b\sigma_i(u)=0$   for some distinct integers $i$ and $j$    and $a,b\in K^\times$  satisfied by all the triples $(u,q, p)\in\mathcal{B}_3$.  
Hence, 
$$
-\sigma^{-1}_i\left(\frac{a}{b}\right)(\sigma^{-1}_i\circ \sigma_j)(u)=u$$
is true for all triples $(u, q, p) \in \mathcal{B}_3$.   Therefore, for any two triples $(u', q',p')$,  $(u'', q'', p'')$ $\in$ $\mathcal{B}_3$,  we have  
$$
\sigma^{-1}_i \circ\sigma_j(u'/u'')=u'/u''.
$$
That is,  the element $u'/u''$  is fixed by the automorphism $\sigma_i^{-1}\circ \sigma_j\notin \mathcal{H}$, and hence  $u'/u''$ belongs to the proper subfield 
$k'$ of $k$ which is fixed by the subgroup generated by $\mathcal{H}$ and $\sigma^{-1}_i\circ \sigma_j$.  To finish the proof of this  Lemma, fix a nonzero $u' \in k$ with $(u', q, p) \in \mathcal{B}_3$ and take any other triple  $(u,q, p) \in \mathcal{B}_3$, then we can get $u/u' \in k'$.

Now we assume that $\beta_1\ne 0$. In this case, the term $\beta_1 \sigma_1(u)$ does appear in \eqref{eq2.31}. Since   $|\alpha_1 qu_1|=|\alpha_1 q u|>1$, we see that  
$$
\max\{|u_1|,\ldots,|u_d|\}=\{|\sigma_1(u)|,\ldots,|\sigma_d(u)|\}\geq |u|>|\alpha_1|^{-1} |q|^{-1}
$$
 holds true for all pairs $(u, q)$ where the triples $(u,q,p)$  satisfying \eqref{eq2.31}. Thus  by \eqref{eq2.31}, we deduce that 
$$
0<|\beta_1\sigma_1(u)+\cdots+\beta_d\sigma_d(u)|<\frac{1}{|q|^{1+\varepsilon}}\frac{1}{H^\varepsilon(u)}<\frac{|\alpha_1|\max\{|\sigma_1(u)|,\ldots,|\sigma_d(u)|\}}{|q|^{\epsilon} H^\epsilon(u)}.
$$
By applying Lemma \ref{lem2.1} with the distinguished place $\omega$ as in the case $\beta_1=0$ and with the inputs $n=d$, $\lambda_i=\beta_i$ for each integer $i = 1, \ldots, d$, we conclude the same as in the case when $\beta_1=0$. This  completes the proof of the lemma.
\end{proof}

\section{Proof of Theorems \ref{maintheorem}, \ref{maintheorem1} and Corollary \ref{cor1}} 

\noindent{\bf Proof of Theorem \ref{maintheorem}.~} Since $\Gamma$ is finitely generated multiplicative subgroup of $\overline{\mathbb{Q}}^\times$, by enlarging $\Gamma$, if necessary, we can reduce to the situation where  $\Gamma\subset\overline{\mathbb{Q}}^\times$ is a group of $S$-units, namely,
$$
\Gamma = \mathcal{O}^\times_S=\{u\in K:\prod_{\mathit{v}\in S}|u|_\mathit{v}=1\}
$$
of a suitable  number field $K$ which is Galois over $\mathbb{Q}$,  with   $\alpha_1,\ldots,\alpha_r$  in $K$ and for a suitable finite set of places $S$ of $K$ which contains all the archimedean places. Also, note that $S$ is stable under Galois conjugation. 
 
\smallskip

Suppose  the assertion is not true. That is, the subset $\mathcal{B}$ (which is defined in \eqref{eq1.1}) is an infinite set.  Then by inductively,  we construct  sequences $\{\alpha^{(1)}_i\}_{i\geq 0},\ldots,\{\alpha^{(r)}_i\}_{i\geq 0}$, whose elements are in  $K$,  with the property that for any integer $n\geq 0$, the numbers $(\alpha^{(1)}_0\cdots\alpha^{(1)}_n)$, $\ldots$, $(\alpha^{(r)}_0\cdots\alpha^{(r)}_n)$ are $\mathbb{Q}$-linearly independent, an infinite decreasing chain $\mathcal{B}_i$ of an infinite subset of $\mathcal{B}$  and an infinite strictly decreasing chain $k_i$ of subfields of $K$ satisfying the following:
\bigskip

{\it For each integer $n\geq 0$, $\mathcal{B}_n\subset (k_n\times\mathbb{Z}^{r+1})\cap\mathcal{B}_{n-1}$, $k_n\subset k_{n-1}$, $k_n\neq k_{n-1}$  and  all but finitely many tuples $(u, q,p_1,\ldots,p_r)\in\mathcal{B}_n$   satisfying the inequalities:      $|\alpha^{(i)}_0\cdots\alpha^{(i)}_n qu|>1$,  $\alpha^{(i)}_0\cdots\alpha^{(i)}_n qu$ is not a pseudo-Pisot number  for some integer $i\in \{1,\ldots, r\}$, and  
\begin{equation}\label{eq4.1}
|\alpha^{(j)}_0\cdots\alpha^{(j)}_n q u-p_j|<\frac{1}{H^{\epsilon/(n+1)}(u)|q|^{\frac{d}{r}+\varepsilon}} \mbox{ for each integer }  j = 1, 2, \ldots, r. 
\end{equation}}
If such sequences exist, then  we  eventually get a contradiction to  the fact that the number field $K$ does not admit an infinite  strictly  decreasing chain of subfields.  Therefore in order to finish the proof of the theorem, it  suffices to construct such sequences.
\smallskip

We proceed our construction by applying induction on $n$: for $n=0$, put $\alpha^{(j)}_0=\alpha_j$ for $1\leq j\leq r$, $k_0=K$  and $\mathcal{B}_0=\mathcal{B}$, and we are done in this case because of our supposition. 

 By the induction hypothesis, we assume that  $\alpha^{(j)}_n$, $k_n$  and $\mathcal{B}_n$ for an integer $n\geq 0$ exist with the property that $(\alpha^{(1)}_0\cdots\alpha^{(1)}_n),\ldots,(\alpha^{(r)}_0\cdots\alpha^{(r)}_n)$ are $\mathbb{Q}$-linearly independent  and  satisfying \eqref{eq4.1}. Now we prove $n+1$-th stage. 
  
For each integer $j  = 1, 2, \ldots, r$, we let 
$$
\delta_j=\alpha^{(j)}_0\cdots\alpha^{(j)}_n.
$$
By the induction hypothesis,  the numbers $\delta_1,\ldots,\delta_r$  are $\mathbb{Q}$-linearly independent  and satisfies \eqref{eq4.1}.  Then by applying Lemma \ref{lem3.1} with $\delta_1,\ldots,\delta_r$, $k=k_n$, we obtain  an element $\gamma_{n+1}\in k_n$, a proper subfield $k_{n+1}$ of $k_n$  and an infinite set $\mathcal{B}_{n+1}\subset\mathcal{B}_n$  such that all tuples $(u, q, p_1,\ldots,p_r)\in\mathcal{B}_{n+1}$  satisfy $u=\gamma_{n+1}v$  with $v\in k_{n+1}$ and $\gamma_{n+1} \in k_n$. Note that since $u\in \mathcal{O}_S^\times$, we observe that $v\in \mathcal{O}_S^\times$. Hence, as $u$ varies, we see that $v$ also varies over $\mathcal{O}_S^\times.$ Thus, we can assume that $(u, q, p_1, \ldots, p_r) \in \mathcal{B}_{n+1}$ if and only if $(v, q, p_1, \ldots, p_r) \in \mathcal{B}_{n+1}$. 

\bigskip

 Set  $\alpha^{(j)}_{n+1}=\gamma_{n+1}$  for all $1\leq j\leq r$. Clearly, 
$$
\alpha_0^{(j)}\cdots\alpha_n^{(j)}\alpha^{(j)}_{n+1}=\delta_j \gamma_{n+1} := \delta_{n+1}^{(j)} \mbox{ for all } 1\leq j\leq r. 
$$
Therefore, by induction hypothesis, it is clear that $\delta_{n+1}^{(1)}, \ldots, \delta_{n+1}^{(r)}$ are 
 $\mathbb{Q}$-linearly independent. Also, by induction hypothesis, we know that  for every tuple $(u,q, p_1, \ldots, p_r) \in \mathcal{B}_{n+1}$, there exists an integer $i \in \{1, \ldots, r\}$ satisfying $|\delta_i qu| > 1$ and $\delta_iqu$ is not a pseudo-Pisot number. Since $\delta_j qu = \delta_jq\gamma_{n+1}v = \delta_{n+1}^{(j)}qv$, for every tuple 
 $(v, q, p_1, \ldots, p_r) \in \mathcal{B}_{n+1}$, there exists an integer  $i$ such that $|\delta_{n+1}^{(i)}qv| > 1$ and $\delta_{n+1}^{(i)}qv$ is not a pseudo-Pisot number and 
 $$
|\delta_{n+1}^{(j)}qv-p_j| = |\delta_j\gamma_{n+1}qv - p_j| = |\delta_jqu-p_j|<\frac{1}{H(\gamma_{n+1}v)^{\varepsilon/(n+1)}|q|^{\frac{d}{r}+\varepsilon}}.
$$
Since  $v\in k_{n+1}$, we see that 
 $$H(\gamma_{n+1}v)\geq H(\gamma_{n+1})^{-1} H(v),
 $$
 and hence in particular,  for almost
 all $v\in k_{n+1}$, we get $H(\delta_{n+1}v)\geq H(v)^{(n+1)/(n+2)}$. Therefore,  for all but finitely many tuples $(v, q, p_1, \ldots, p_r) \in \mathcal{B}_{n+1}$ and for all $1\leq j\leq r$, we have the following  inequality  
 $$
|\delta_{n+1}^{(j)}q v-p_j|<\frac{1}{H(v)^{\varepsilon/(n+2)}|q|^{\frac{d}{r}+\varepsilon}}.
$$
This proves the induction and hence the theorem. $\hfill\Box$

\bigskip

\noindent{\bf Proof of Theorem \ref{maintheorem1}.~} The proof of this theorem is similar to the proof of Theorem \ref{maintheorem}. 

Suppose that there are infinitely many triples $(u, q, p)\in \mathcal{O}^\times_S\times \mathbb{Z}^2$ satisfying the following inequality
$$
0<|\alpha_1 q \sigma_1(u)+\cdots+\alpha_d q\sigma_d(u)-p|\leq |q|^{-d-\varepsilon}H^{-\varepsilon}(u),
$$
where $\sigma_i$'s are all the embeddings of $\mathbb{Q}(u)$ to $\mathbb{C}$.  Then by inductively,  we construct sequences $\{\alpha_{i,0}\}_{i=0}^\infty$,$\ldots$, $\{\alpha_{i,d}\}_{i=0}^\infty$ whose elements are in  $K$, an infinite decreasing chain $\mathcal{B}_i$ of an infinite subset of $\mathcal{B}$  and an infinite strictly decreasing chain $k_i$ of subfields of $K$ satisfying the following properties:
\bigskip

{\it For each integer $n\geq 0$, $\mathcal{B}_n\subset (k_n\times \mathbb{Z}^2)\cap\mathcal{B}_{n-1}$, $k_n\subset k_{n-1}$, $k_n\neq k_{n-1}$  and all but finitely many triples $(u,q,p)\in\mathcal{B}_n$   satisfying the inequalities:      $|\alpha_{i,0}\cdots\alpha_{i,n} qu_i|>1$,  $\alpha_{i,0}\cdots\alpha_{i,n} qu_i$ is not a  pseudo-Pisot number for some integer $i\in\{1, \ldots, d\}$ and  
\begin{equation}\label{eq4.2}
|\alpha_{1,0}\cdots\alpha_{1,n}q\sigma_1(u)+\cdots+\alpha_{d,0}\cdots\alpha_{d,n}q\sigma_d(u)-p|<\frac{1}{H^{\varepsilon/(n+1)}(u)|q|^{d+\varepsilon}}.
\end{equation}}
If such sequences exist, then  we eventually get a contradiction to the fact that the number field $K$ does not admit any infinite  strictly  decreasing chain of subfields.  Therefore in order to finish the proof of the theorem, it  suffices to construct such sequences.
\smallskip

We proceed our construction by applying induction on $n$: for $n=0$, put $\alpha_{i,0}=\alpha_i$ for each integer $i= 1,\ldots, d$, $k_0=K$  and $\mathcal{B}_0=\mathcal{B}$, and we are done in this case because of our supposition. 
By the induction hypothesis, we assume that  $\alpha_{i,n}$, $k_n$  and $\mathcal{B}_n$ for an integer $n\geq 0$  and  \eqref{eq4.2} holds true. Then by applying Lemma \ref{lem3.2} with $k=k_n$  and 
$$
\delta_1=\alpha_{1,0}\cdots\alpha_{1,n},  \ldots, \delta_d = \alpha_{d,0}\cdots\alpha_{d,n},
$$
we obtain an element $\gamma_{n+1}\in k_n$, a proper subfield $k_{n+1}$ of $k_n$  and an infinite set $\mathcal{B}_{n+1}\subset\mathcal{B}_n$  such that all triples $(u,q,p)\in \mathcal{B}_{n+1}$  satisfy $u=\gamma_{n+1}v$  with $v\in k_{n+1}$. Note that since $u\in\mathcal{O}^\times_S$, we observe that $v\in\mathcal{O}^\times_S$.  Hence, as $u$ varies, we see that $v$ varies over $\mathcal{O}^\times_S$.  Thus,  we can assume that $(u, q, p)\in\mathcal{B}_{n+1}$ if and only if $(v, q, p)\in\mathcal{B}_{n+1}$.  
\bigskip

Set  $\alpha_{j,n+1}=\sigma_j(\gamma_{n+1})$  for all $1\leq j\leq d$.  Clearly,
$$
\alpha_{j,0}\cdots\alpha_{j,n}\alpha_{j,n+1}=\delta_j\sigma_j(\gamma_{n+1}):=\delta^{(j)}_{n+1}\mbox{~~for all~~} 1\leq j\leq d.
$$
By the induction hypothesis, we know that for every triple $(u,q,p)\in \mathcal{B}_{n+1}$, there exists an integer $i$ satisfying $|\delta_i qu|>1$  and $\delta_i qu$ is not a pseudo-Pisot number.  Since $\delta_j q u_j=\delta_j q \sigma_j(\gamma_{n+1}v)=\delta^{(j)}_{n+1}q\sigma_j(v),$  for every triple $(u,q,p)\in\mathcal{B}_{n+1}$, there exists an integer $i$  such that $|\delta^{(i)}_{n+1}q u_i|>1$  and $\delta^{(i)}_{n+1}qu_i$ is not a pseudo-Pisot number and 
\begin{align*}
|\delta^{(1)}_{n+1} q\sigma_1(v)+\cdots+\delta^{(i)}_{n+1} q\sigma_i(v)+\cdots+\delta^{(d)}_{n+1} q\sigma_d(v)-p|&=|\delta_1 q \sigma_1(\gamma_{n+1} v)+\cdots+\delta_d q \sigma_d(\gamma_{n+1} v)-p|\\&=|\delta_1 q u_1+\cdots+\delta_d q u_d-p|<\frac{1}{H^{\varepsilon/(n+1)}(\gamma_{n+1}v)|q|^{d+\varepsilon}}.
\end{align*}
Since $v\in K$, we see that 
$$ 
H(\gamma_{n+1}v)\geq H(\gamma_{n+1})^{-1} H(v),
$$ 
and hence in particular, for almost all $v\in K$,  we have $H(\delta_{n+1}v)\geq H(v)^{(n+1)/(n+2)}$. Therefore, for all but  finitely many such triple $(v,q,p)\in\mathcal{B}_{n+1}$, we have the following inequality 
$$
|\delta^{(1)}_{n+1} q v+\cdots+\delta^{(i)}_{n+1} q\sigma_i(v)+\cdots+\delta^{(d)}_{n+1} q\sigma_d(v)-p|<\frac{1}{H^{\varepsilon/(n+2)}(v)|q|^{d+\varepsilon}}
$$
holds true.  This proves the induction step  and hence the theorem.  $\hfill\Box$
\bigskip

\noindent{\bf Proof of Corollary \ref{cor1}.}  Suppose the assertion of Corollary \ref{cor1} is false. Then   $\alpha_1,\ldots,\alpha_r$  are algebraic numbers. By choosing $\varepsilon<\eta\log |\alpha|/\log H(\alpha)$, we see that there are  infinitely many tuples $(n,q,p_1,\ldots,p_r)$ in $\mathbb{Z}_{>0}^2\times \mathbb{Z}^r$   satisfying 
$$
0<|\alpha_j \alpha^n q -p_j|<\frac{1}{H(\alpha^n)^{\varepsilon}q^{\frac{d}{r}+\varepsilon}}\quad\mbox{for all~~} 1\leq j\leq r.
$$
Then by taking $\Gamma = \langle \alpha\rangle$  and $u = \alpha^n$ in  Theorem \ref{maintheorem},  we get,  for infinitely many values of $n$, $\alpha_i q \alpha^n$ is a pseudo-Pisot number for all $1\leq i\leq r$, and in particular all their other conjugates have modulus less than $1$.

\smallskip  

Let $K$ be the  Galois closure   of the number field $\Q(\alpha,\alpha_1,\ldots,\alpha_r)$ over $\Q$.  By our  assumption on $\alpha$, we know that $\alpha$ has a conjugate $\beta$ with $|\beta|>|\alpha|$. Therefore there exists an automorphism $\sigma:K\to K$ maps $\alpha$ to $\beta$. Hence,  for all $n\in\N$,  we have $\sigma(q\alpha_i \alpha^n)=q\sigma(\alpha_i)\beta^n$. Since $\alpha_i q \alpha^n$ is a pseudo-Pisot number for infinitely many values of $n$, we see that all the other conjugates of $\alpha_iq\alpha^n$ have modulus $<1$. In particular, the same is true for $\sigma(q\alpha_i \alpha^n)$. But, since $|\sigma(q\alpha_i\alpha^n)| = |q\sigma(\alpha_i)| |\beta|^n$, and $ |\beta| > |\alpha|> 1$, this is impossible.  This proves the corollary. $\hfill\Box$
\bigskip

\noindent{\bf Acknowledgements.} We are grateful to the referee for many constructive suggestions and pointing out many typos to make the article readable and clear. Also, we are thankful to NISER, Bhubaneswar where,  in December, 2019, this work was blossomed.    The first author is thankful to Professor P. Philippon for his useful discussion in the preliminary version of this article and encouragements in this work. The second author is thankful to SERB-METRICS project.  

\bigskip

\end{document}